\newtheorem{theorem}{Theorem}
\newtheorem{proposition}[theorem]{Proposition}
\newtheorem{definition}[theorem]{Definition}
\newtheorem{lemma}[theorem]{Lemma}
\newtheorem{corollary}[theorem]{Corollary}
\author{Timothy G. F. Jones}
\date{}
\title{Further improvements to incidence and Beck-type bounds over prime finite fields}
\begin{document}

\maketitle

\begin{abstract}
We establish improved finite field Szemer\'edi-Trotter and Beck type theorems. First we show that if $P$ and $L$ are a set of points and lines respectively in the plane $\mathbb{F}_p^2$, with $|P|,|L| \leq N$ and $N<p$, then there are at most $C_1 N^{\frac{3}{2}-\frac{1}{662}+o(1)}$ incidences between points in $P$ and lines in $L$. Here $C_1$ is some absolute constant greater than $1$. This improves on the previously best-known bound of $C_1 N^{\frac{3}{2}-\frac{1}{806}+o(1)}$.

Second we show that if $P$ is a set of points in $\mathbb{F}_p^2$ with $|P|<p$ then either at least $C_2|P|^{1-o(1)}$ points in $P$ are contained in a single line, or $P$ determines least $C_2 |P|^{1+\frac{1}{109}-o(1)}$ distinct lines. Here $C_2$ is an absolute constant less than $1$. This improves on previous results in two ways. Quantitatively, the exponent of $1+\frac{1}{109}-o(1)$ is stronger than the previously best-known exponent of $1+\frac{1}{267}$. And qualitatively, the result applies to all subsets of $\mathbb{F}_p^2$ satisfying the cardinality condition; the previously best-known result applies only when $P$ is of the form $P=A \times A$ for $A \subseteq \mathbb{F}_p$. 
\end{abstract}

\section{Introduction}

This paper proves new results concerning two types of incidence problem between points and lines in the plane determined by a finite field $\mathbb{F}_p$ of prime order $p$. 

Throughout, we use  $Y=O(X)$, $X=\Omega(Y)$ and $Y \ll X$ all to mean that there is an absolute constant $C$ with $Y\leq CX$. We use $Y \approx X$ or $Y = \Theta(X)$ to mean $Y \ll X$ and $X \ll Y$. If the implicit constant $C$ is dependent on a parameter $\epsilon$ then we indicate this with a subscript, e.g. $Y \ll_{\epsilon} X$. We also use $Y \lesssim X$, $Y=\widetilde{O}(Y)$ etc. to mean that there is an absolute constant $c$ with $Y\ll \log(X)^{c} X$. 

\subsection{Incidence bounds over a prime finite field}
The first problem considered is that of obtainining efficient bounds on the number $I(P,L)$ of incidences between a set $P$ of points and a set $L$ of lines in the plane $\mathbb{F}_p^2$. If $|P|,|L| \leq N$ then we know by Cauchy-Schwarz that $I(P,L)\ll N^{3/2}$, so non-trivial incidence bounds are of the form $I(P,L)\ll N^{3/2-\epsilon}$ for $\epsilon >0$.

When working over the plane $\mathbb{R}^2$, Szemer\'edi and Trotter \cite{ST} obtained the sharp bound $\epsilon \geq 1/6$. This was extended to $\mathbb{C}$ by Toth \cite{toth} and a near-sharp generalisation to higher dimensional points and varieties was recently given by Solymosi and Tao \cite{solymositao}, subsequently made sharp in the $\mathbb{R}^4$ case by Zhal \cite{zahl}. 

In the finite field case considered here, one must impose nondegeneracy conditions in order to prove nontrivial bounds, since in the case $P= \mathbb{F}_p^2$ it possible to obtain  $I(P,L)\approx N^{3/2}$. When $N<p^{2-\delta}$, Bourgain, Katz and Tao \cite{BKT} proved the existence of an $\epsilon >0$, dependent only on $\delta>0$. Helfgott and Rudnev \cite{HR} then obtained $\epsilon \geq 1/10,678$ in the `small $N$' case $\delta \geq 1$. This was subsequently improved by the present author to $\epsilon \geq 1/806-o(1)$. In the `large $N$' case $\delta<1$, Vinh \cite{vinh} has obtained an explicit lower bound on $\epsilon$ in terms of $\delta$.

In this paper we further improve the state of the art in the `small $N$' case to $\epsilon \geq 1/662-o(1)$:

\begin{theorem}\label{theorem:incidences}
If $P$ and $L$ are a set of points and lines over $\mathbb{F}_p$ with $|P|,|L| \leq N <p$ then $I(P,L)\lesssim N^{\frac{3}{2}-\frac{1}{662}}$.
\end{theorem}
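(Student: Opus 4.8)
The plan is to follow the now-standard strategy of deducing a finite-field incidence bound from a sum-product estimate, but to make each step of the reduction as efficient as possible, so as to convert the best available sum-product exponent into the claimed incidence exponent with minimal loss. I would begin with the usual reductions. First, by Cauchy--Schwarz one has $I(P,L)^2 \le |L| \sum_{\ell \in L} |P \cap \ell|^2$, and the sum on the right counts, up to the diagonal contribution $I(P,L)$, the number of ordered pairs of distinct points of $P$ that are collinear along a line of $L$. Thus it suffices to bound this count of collinear pairs (equivalently, after a further application of Cauchy--Schwarz, the number of collinear triples). Second, by dyadic pigeonholing I would pass to a regularised sub-configuration in which every surviving line is incident to $\approx k$ points and every surviving point lies on $\approx d$ lines, losing only a factor of $N^{o(1)}$; this is what produces the $o(1)$ in the exponent. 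Throughout I would exploit the affine (indeed projective) invariance of the incidence count to normalise the positions of a few distinguished points and lines.

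The heart of the argument is to translate geometric richness into arithmetic structure: a regularised configuration with many collinear triples forces the existence of a set $A \subseteq \mathbb{F}_p$ whose additive and multiplicative structure is abnormally constrained. Concretely, I would fix two points lying on many common rich lines, use them to coordinatise the pencils of lines through nearby points by their slopes, and observe that the collinearity relations among the rich points become additive and multiplicative relations among these slopes. After this change of variables the collinear-triple count is bounded below by a mixed additive/multiplicative energy of a set $A$ of size a suitable power of $N$, so that a large incidence count forces $E_+(A)$ and $E_\times(A)$ to be large simultaneously; passing through a Balog--Szemer\'edi--Gowers step then yields a refinement $A'$ with both $|A'+A'|$ and $|A' \cdot A'|$ small. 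The hypothesis $N<p$ guarantees that $A$ has size below $p$, so that the sum-product phenomenon is available and no wraparound obstruction occurs.

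With the configuration reduced to a sum-product question, I would invoke the strongest available sum-product estimate over $\mathbb{F}_p$ — a bound of the shape $\max(|A+A|,|A\cdot A|) \gg |A|^{1+c}$ for an explicit $c>0$, or its energy analogue — to contradict the smallness forced above unless the incidence exponent $\epsilon$ stays below the asserted threshold. The remaining work is bookkeeping: one carries the dyadic parameters through the reduction — the number $m$ of rich lines with $mk \approx I(P,L)$ and $m,k \le N$, and dually the parameters $n,d$ for points — expresses the resulting inequality as a constraint relating $\epsilon$ to the sum-product exponent $c$, and optimises over all free parameters. I expect the genuine difficulty to lie precisely here, in two places. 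First, one must ensure that the passage from the geometric configuration to the arithmetic set $A$ is essentially lossless, since every inefficiency degrades the final exponent; this is exactly where an improvement over the earlier $1/806$ bound must originate, either through a sharper energy input or through a tighter reduction. Second, the multi-parameter optimisation, together with the separate treatment of the degenerate regimes (for instance when the points are concentrated on very few lines, or when $k$ and $d$ are wildly unbalanced), must be carried out carefully to verify that the optimum is indeed $\epsilon = 1/662 - o(1)$.
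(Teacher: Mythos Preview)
Your outline captures the general architecture --- regularise, convert rich incidences into arithmetic constraints on a set $A$, apply a finite-field sum-product estimate, then optimise --- and that is indeed the skeleton of the argument. But as written, the geometric reduction you describe is essentially the one that yields the \emph{previous} exponent $1/806$, not $1/662$. The gap is in the step ``fix two points lying on many common rich lines, use them to coordinatise the pencils \ldots''.

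The improvement in the paper comes from iterating that refinement. After finding $p_1,p_2$ and a subset $Q\subseteq P$ that is $O(K)$-good with respect to both (your two-point step, which is Proposition~\ref{theorem:refine1}), the paper runs the same popularity argument \emph{again inside $Q$} to locate two further points $p_3,p_4\in Q$, collinear with $p_2$ along a line $l\in L$ avoiding $p_1$, and a subset $R\subseteq Q$ with $|R|\approx |P_1|K^8/|L|^4$ that is $O(K)$-good for all four of $p_1,p_2,p_3,p_4$ (Proposition~\ref{theorem:refine2}). This specific projective configuration --- three of the four pivots collinear --- is what makes the translation to arithmetic (Proposition~\ref{theorem:reduction}) so clean: sending $p_3,p_4$ to infinity and $p_1$ to the origin turns the four $K$-goodness conditions simultaneously into $|A|,|B|,|A\overset{G}{-}B|,|A\overset{G}{/}B|\ll K$ with $|G|=|R|$. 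Stopping at two pivots gives you only two of these four bounds, and the resulting partial sum-product problem is weaker; that is precisely why the earlier paper reached only $1/806$.

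A second, smaller inefficiency: you propose to Balog--Szemer\'edi--Gowers on both the additive and multiplicative sides to reach a set with small $|A'+A'|$ and small $|A'\cdot A'|$. The paper is more economical: it runs a full BSG only additively (to bound $|A'-A'|$), but on the multiplicative side retains the \emph{energy} lower bound $E_\times(A')$ directly (Lemma~\ref{theorem:halfbsg}), and then plugs both into Rudnev's estimate in the asymmetric form $E_\times(A)^4\lesssim |A-A|^7|A|^4$. Doing BSG twice would cost extra powers and would not recover $1/662$. So the two concrete missing ingredients are the second refinement to four collinear-constrained pivots, and the asymmetric BSG/energy treatment in Lemma~\ref{theorem:halfbsg}.
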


\subsection{Beck-type theorems over a prime finite field}
The second problem considered is that of obtaining Beck-type results over $\mathbb{F}_p$. Given a set $P$ of points in the plane, write $L(P)$ for the set of lines determined by pairs of points. Beck \cite{beck} proved that for any finite $P \subseteq \mathbb{R}^2$ at least one of two things happens. Either $\Omega(|P|)$ of the points are colinear, or $|L(P)|\gg |P|^2$.

Helfgott and Rudnev \cite{HR} proved a form of Beck's theorem for finite fields. In particular case of the direct product $P= A \times A$ they showed that $|L(P)| \gg |P|^{1+ 1/267}$ so long as $|P|<p$. As with the problem of counting incidences, a nondegeneracy condition of this kind is required for results of this type because $|L(P)|\approx p^2$ when $P = \mathbb{F}_p^2$. In fact, Iosevich, Rudnev and Zhai \cite{IRZ} recently showed that $|L(P)|\approx p^2$ whenever $|P|>p \log p$. 

We improve on the Helfgott-Rudnev bound in two respects. First, we improve the exponent $1/267$ to $1/109-o(1)$. Second, we establish the result for general $P \subseteq \mathbb{F}_p^2$ rather than simply those of the form $P=A \times A$: 

\begin{theorem}\label{theorem:beck2}
If $P \subseteq \mathbb{F}_p^2$ and $|P|<p$ then at least one of the following must occur:
\begin{enumerate}
\item At least $\widetilde{\Omega}\left(|P|\right)$ points from $P$ are contained in a single line.
\item $|L(P)|\gtrsim|P|^{1+\frac{1}{109}}$. \end{enumerate}
\end{theorem}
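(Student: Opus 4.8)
The plan is to prove the contrapositive: assuming that condition~(1) fails, I will deduce condition~(2). Write $n=|P|$. The failure of~(1) means that every line meets $P$ in at most $M$ points, where $M=cn/\log^{c'}n$ is the threshold hidden inside the $\widetilde\Omega$. The natural starting point is the exact pair count $\sum_{\ell\in L(P)}\binom{k_\ell}{2}=\binom{n}{2}$, where $k_\ell=|\ell\cap P|$ denotes the richness of $\ell$. Decomposing $L(P)$ dyadically by richness into the $O(\log n)$ ranges $[2^j,2^{j+1})$ and pigeonholing, I obtain a \emph{popular scale} $k$ such that the set $L^\ast$ of lines of richness in $[k,2k)$ satisfies $t:=|L^\ast|\gtrsim n^2/(k^2\log n)$ and carries a $\gtrsim 1/\log n$ fraction of all pairs. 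Since $k\le M$, these popular lines are the carriers I must control. Note that this setup uses only the pair count and the incidence bound, never any product structure, so it applies to a general $P\subseteq\mathbb{F}_p^2$, which already delivers the qualitative strengthening over the $A\times A$ case.

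Next I feed $L^\ast$ into Theorem~\ref{theorem:incidences}. The number of incidences between $P$ and $L^\ast$ is $\approx tk$, and I bound it from above by partitioning whichever of $P$ or $L^\ast$ is the larger set into blocks of size $\le n<p$, so that the hypothesis $N<p$ is always met. This produces a clean dichotomy. If $t>n$, partitioning $L^\ast$ gives $tk\lesssim t\,n^{1/2-1/662}$, hence $k\lesssim n^{1/2-1/662}$, and combined with $t\gtrsim n^2/(k^2\log n)$ this yields $|L(P)|\ge t\gtrsim n^{1+2/662}/\log n$, which is condition~(2). If instead $t\le n$, partitioning $P$ gives $tk\lesssim n\,t^{1/2-1/662}$, i.e. $k\lesssim n\,t^{-(1/2+1/662)}$; substituting the popularity bound $t\gtrsim n^2/(k^2\log n)$ and rearranging forces $(n/k)^{2/662}\lesssim(\log n)^{O(1)}$, hence $k\gtrsim n/(\log n)^{O(1)}=\widetilde\Omega(n)$, so each popular line would already contain $\widetilde\Omega(n)$ points of $P$, contradicting the failure of~(1). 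The two cases are exhaustive: the line-partition estimate shows $t>n$ is incompatible with $k\gg n^{1/2-1/662}$, while the point-partition estimate shows $t\le n$ is incompatible with $k\ll n/\mathrm{polylog}$. Since~(1) fails, the second case cannot occur, so the first holds and~(2) follows.

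The delicate point, and the source of the precise exponent, is purely quantitative. The crude partitioning steps above invoke Theorem~\ref{theorem:incidences} only in its balanced form and therefore lose information, delivering the weaker conclusion $|L(P)|\gtrsim n^{1+2/662}=n^{1+1/331}$. To reach the claimed exponent $1+1/109$ I expect to replace these steps by the sharper \emph{unbalanced} incidence estimate that the proof of Theorem~\ref{theorem:incidences} actually supplies when $|P|$ and $|L|$ are far from equal, and then to re-optimise the threshold separating the two cases of the dichotomy; this is exactly the place where a better reduction, rather than merely a better incidence exponent, is needed. Securing this unbalanced input in a usable form, together with the bookkeeping required to absorb every logarithmic factor into the $\widetilde\Omega$ notation and to keep each invocation of Theorem~\ref{theorem:incidences} on configurations of size $<p$, is the main obstacle; the geometric and combinatorial skeleton is otherwise exactly the dyadic-pigeonhole-plus-incidence scheme described above.
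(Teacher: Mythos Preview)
Your skeleton is sound and the dyadic pigeonholing is exactly right, but the proposal as written establishes only $|L(P)|\gtrsim|P|^{1+1/331}$, and you yourself flag this. The gap is not a matter of bookkeeping: treating Theorem~\ref{theorem:incidences} as a black box cannot recover the exponent $1+1/109$, and the ``unbalanced incidence estimate'' you hope to invoke is never stated or proved in the paper. What the paper actually does is bypass Theorem~\ref{theorem:incidences} entirely and re-run the machinery behind it directly on the Beck configuration.

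Concretely, after the same pigeonholing you perform (obtaining $L_1$ with $|L_1|k^2\approx|P|^2$), the paper pigeonholes once more to find $P_1\subseteq P$ and $K$ with every point of $P_1$ incident to $\Theta(K)$ lines of $L_1$ and $K\gtrsim|L_1|k/|P_1|$. It then applies Propositions~\ref{theorem:refine1} and~\ref{theorem:refine2} to $P_1,L_1,K$, lands in the structured fifth case, converts via Proposition~\ref{theorem:reduction} to a partial sum--product problem, and invokes Proposition~\ref{theorem:partialsumprod} to get $|P_1|^{55}K^{331}\ll|L_1|^{220}$. Substituting the two pigeonhole relations directly into this inequality is what produces $k^{109}\lesssim|P|^{54}$ and hence $|L_1|\gtrsim|P|^{110/109}$. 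The point is that the raw inequality $|P_1|^{55}K^{331}\ll|L_1|^{220}$ is strictly more informative than the symmetrised statement $I(P,L)\lesssim N^{3/2-1/662}$, and the loss in passing from the former to the latter is exactly the factor of three you are missing. So the fix is not to sharpen Theorem~\ref{theorem:incidences} but to open it up and apply its ingredients to $P_1$ and $L_1$ directly.
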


\subsection{Structure of the paper}

The proofs of Theorems \ref{theorem:incidences} and \ref{theorem:beck2} develop the method in \cite{TJincidences2011}, and   readers are referred to that paper for a sketch of the approach. 

The structure of this paper is as follows. Section \ref{section:refine} shows how to refine a setup of points and lines to a particular configuration. Section \ref{section:interpretation} interprets this configuration as a partial sum-product problem, for which Section \ref{section:bounding} obtains efficient bounds. Finally, Section \ref{section:proofs} uses the results from the previous three sections to prove Theorems \ref{theorem:incidences} and \ref{theorem:beck2}.

We remark that the results of Sections \ref{section:refine} and \ref{section:interpretation} are general and apply when working over any field. Sections \ref{section:bounding} and \ref{section:proofs} are specific to the $\mathbb{F}_p$ setting. 

\section{Refining points and lines}\label{section:refine}

This section shows that if there exist many points that are each incident to many lines then a large set of points must lie in a certain kind of configuration. In Section \ref{section:interpretation} we will interpret this configuration as corresponding to a partial sum-product problem. We begin with a definition:

\begin{definition}
Let $p$ be a point in the plane, and $P$ be a set of points in the plane. We say that the pair $(P,p)$ is \textbf{$K$-good} if $P$ is supported over at most $K$ lines through $p$.
\end{definition}

The main results of this section are the following two propositions.

\begin{proposition}\label{theorem:refine1} Let $P$ and $L$ be a set of points and lines respectively over a plane such that every point in $P$ is incident to $\Theta(K)$ lines in $L$. Suppose that $|P|\gg 1$ and $|P|K^2\gg |L|$. 

Then there exist distinct points $p_1,p_2 \in P$, and a point-set $Q \subseteq P$ with $|Q|\approx \frac{K^4|P|}{|L|^2} $ such that $(Q,p_1)$ and $(Q,p_2)$ are both $O(K)$-good, and the $O(K)$ lines supporting $Q$ are in each case elements of $L$.
\end{proposition}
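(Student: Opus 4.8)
The plan is to work with, for each point $p \in P$, the \emph{shadow} $S_p \subseteq P$ consisting of all points of $P$ that lie on some line of $L$ through $p$. Because every point of $P$ meets $\Theta(K)$ lines of $L$, the set $S_p$ is covered by $O(K)$ lines of $L$ through $p$; hence for any subset $Q \subseteq S_p$ the pair $(Q,p)$ is automatically $O(K)$-good, with its supporting lines lying in $L$. Thus it suffices to find distinct $p_1, p_2 \in P$ for which $|S_{p_1} \cap S_{p_2}| \gtrsim K^4|P|/|L|^2$, and then take $Q$ to be any subset of $S_{p_1}\cap S_{p_2}$ of the stated size. The crucial structural observation is the symmetry $q \in S_p \iff p \in S_q$, since both assertions say exactly that $p$ and $q$ share a common line of $L$.

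First I would establish a lower bound on $\sum_{q \in P} |S_q|$. Writing $n_\ell = |\ell \cap P|$, the number of ordered pairs of distinct points that are collinear via $L$ equals $\sum_{\ell \in L} n_\ell(n_\ell - 1)$, and Cauchy--Schwarz against $\sum_\ell n_\ell = I(P,L) \approx |P|K$ gives $\sum_q |S_q| \gtrsim |P|^2K^2/|L|$. A point lying only on lines of $L$ that meet $P$ in a single point contributes nothing to any shadow, so it is harmless --- indeed convenient --- to first discard such singleton lines, after which $\sum_\ell n_\ell(n_\ell-1) \approx \sum_\ell n_\ell^2$.

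Next I would run the main double count. By the symmetry above, $\sum_{(p_1,p_2)\in P^2} |S_{p_1}\cap S_{p_2}| = \sum_{q\in P} |S_q|^2$, and a further Cauchy--Schwarz yields $\sum_{q} |S_q|^2 \ge (\sum_q |S_q|)^2/|P| \gtrsim |P|^3K^4/|L|^2$. The diagonal pairs $p_1=p_2$ contribute exactly $\sum_p |S_p| \approx |P|^2K^2/|L|$, so the ratio of the full sum to the diagonal is $\approx |P|K^2/|L|$, which is $\gg 1$ precisely by the hypothesis $|P|K^2 \gg |L|$. Hence the off-diagonal sum over distinct pairs remains $\gtrsim |P|^3 K^4/|L|^2$, and averaging over the at most $|P|^2$ ordered pairs $(p_1,p_2)$ with $p_1 \ne p_2$ produces a distinct pair with $|S_{p_1}\cap S_{p_2}| \gtrsim K^4|P|/|L|^2$, as required; passing to a subset of this intersection supplies the matching upper bound on $|Q|$.

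The step I expect to be the main obstacle is controlling the diagonal term --- equivalently, guaranteeing that the count of genuine collinear pairs (rather than mere incidences) dominates --- and this is exactly where the hypothesis $|P|K^2\gg|L|$ is forced upon us. The hypothesis $|P|\gg 1$ is used only to ensure that the two extracted points can be taken distinct and that the implicit $\Theta$ and $O$ constants remain meaningful. The secondary technical point is the bookkeeping around lines meeting $P$ in a single point, which is dealt with by the preliminary pruning noted above, after which every estimate in the argument is clean.
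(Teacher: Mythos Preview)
Your argument is correct and takes a genuinely different route from the paper's. The paper proceeds via an auxiliary refinement lemma (its Lemma~\ref{theorem:cover1}): one first passes to the rich lines $L_1\subseteq L$, then to the points $P_1\subseteq P$ that are rich with respect to $L_1$, thereby guaranteeing $|P_p|\gg K^2|P|/|L|$ for every $p\in P_1$; applying this lemma once yields $p_1$, and applying it again to the set $P_{p_1}$ yields $p_2$ with $|P_{p_1}\cap P_{p_2}|\gg K^4|P|/|L|^2$. Your approach replaces this two–step iteration by a single second–moment computation, exploiting the symmetry $q\in S_p\Leftrightarrow p\in S_q$ to rewrite $\sum_{p_1,p_2}|S_{p_1}\cap S_{p_2}|$ as $\sum_q|S_q|^2$ and then applying Cauchy--Schwarz twice. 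What the paper's route buys is a reusable lemma that is invoked again in the proof of Proposition~\ref{theorem:refine2}; what your route buys is a shorter, more self-contained argument that avoids the intermediate rich-line/rich-point refinement entirely.

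Two small points of precision. First, you write $\sum_p|S_p|\approx |P|^2K^2/|L|$ for the diagonal, but only the lower bound is actually available; fortunately only the lower bound is needed, since with $T=\sum_p|S_p|$ the off-diagonal contribution is at least $T^2/|P|-T$, and $T/|P|\gtrsim |P|K^2/|L|\gg 1$ suffices. Second, the pruning of singleton lines requires knowing that the surviving incidence count is still $\approx|P|K$; the cleanest fix is to first discard lines with $n_\ell=0$, after which $|L|\le I(P,L)\approx|P|K$ and hence $\sum_\ell n_\ell(n_\ell-1)=\sum_\ell n_\ell^2-I\ge I^2/|L|-I$ is already of the required order. (The paper's own proof is equally casual about this edge case, so this is not a deficiency relative to the source.)
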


\begin{proposition}\label{theorem:refine2} Let $Q,p_1,p_2$ be as provided by Proposition \ref{theorem:refine1}. Suppose in addition that each of the $O(K)$ supporting lines through each of $p_1,p_2$ is incident to at most $\frac{|P|}{K}$ points in $Q$ and that $|Q|\gg |L|/K,|P|/K, K$. 

Then there exist distinct points $p_3,p_4 \in Q$ such that $p_2,p_3,p_4$ are colinear along a line $l\in L$ that is not incident to $p_1$, and there exists a point-set $R \subseteq Q$ with $|R| \approx \frac{|P|K^8}{|L|^4}$ such that $(R,p_3)$ and $(R,p_4)$ are both $O(K)$-good.
\end{proposition}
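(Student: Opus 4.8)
The plan is to re-run the pivoting idea of Proposition~\ref{theorem:refine1}, but now \emph{inside} the set $Q$, exploiting the structure it already carries. The starting observation is that since $(Q,p_1)$ and $(Q,p_2)$ are both $O(K)$-good, $Q$ lies in the intersection of two pencils: the $O(K)$ lines through $p_1$ and the $O(K)$ lines through $p_2$. Applying a projective transformation sending $p_1,p_2$ to the two points at infinity in the horizontal and vertical directions (legitimate here, as Section~\ref{section:refine} works over an arbitrary field), these pencils become the horizontal and vertical rulings of an axis-parallel grid, so that $Q \subseteq B \times A$ for sets $A,B \subseteq \mathbb{F}_p$ with $|A|,|B|=O(K)$. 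The hypothesis that each supporting line carries at most $|P|/K$ points of $Q$ says precisely that every row and column meets $Q$ in at most $|P|/K$ points, and this balance is what will control the second moments below. In these coordinates collinearity of grid points is governed by ratios of differences drawn from $A$ and $B$, which is exactly the partial sum--product configuration that Section~\ref{section:interpretation} then exploits.

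First I would select the line $l$. The line $p_1p_2$ carries at most $|P|/K$ points of $Q$, so since $|Q|\gg|P|/K$ the remaining $O(K)$ lines through $p_2$ still support $\gg|Q|$ points; pigeonholing over these lines produces a line $l$ through $p_2$ with $l\neq p_1p_2$ and $|Q\cap l|\gtrsim|Q|/K$. Being a line through $p_2$ other than $p_1p_2$, $l$ is not incident to $p_1$, and $l\in L$ since it is a supporting line. The points $p_3,p_4$ will be chosen among $S:=Q\cap l$, which automatically places them, together with $p_2$, on $l$, giving the required collinearity and non-incidence.

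The engine for producing $R$ is a popularity argument over the $\gtrsim|Q|/K$ candidate pivots in $S$. For a pivot $s\in S$, let $f_s(m)$ be the number of points of $Q\setminus l$ on the line through $s$ of slope $m$; the slopes that occur lie in a set of the form $(A-\alpha_0)/B$, so there are $O(K^2)$ of them, and $\sum_m f_s(m)$ counts $Q\setminus l$. I would double count the incidences $\sum_{s\in S}\sum_m f_s(m)$ and apply Cauchy--Schwarz, using the slope bound to control the number of occupied $(s,m)$ pairs and the balance $f_s(m)\le|P|/K$ to control the heavy lines, thereby locating a pivot $p_3\in S$ whose $O(K)$ richest pencil-lines cover a large subset $R_3\subseteq Q$. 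Repeating the argument with the pivots restricted to be compatible with $R_3$ then yields a second pivot $p_4\in S$ and a common set $R$ on which both $(R,p_3)$ and $(R,p_4)$ are $O(K)$-good. Throughout, the hypotheses $|Q|\gg|L|/K,\,|P|/K,\,K$ ensure that the discarded line $p_1p_2$, the diagonal terms $r=r'$, and the pivots themselves are negligible against the main term.

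The main obstacle is reaching the full target size $|R|\approx|Q|^2/|P|=|P|K^8/|L|^4$ rather than a weaker count. A direct Cauchy--Schwarz, counting the occupied slopes by the crude bound $O(K^2)$, falls short of the target by a factor of order $K$; closing this gap is the technical heart of the proposition. The saving must come from the product structure of $B\times A$ together with the row/column balance: one has to argue that, for a well-chosen pivot in $S$ and after passing to the subset $R$, the \emph{rich} lines of the pencil through that pivot number only $O(K)$, not $O(K^2)$, and that $p_3$ and $p_4$ can be chosen jointly so that the two goodness conditions hold simultaneously on a single $R$. Managing this joint selection, while keeping the quantitative loss confined to the admissible $\lesssim$ factors, is where the real work lies.
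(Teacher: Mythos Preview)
Your proposal correctly isolates a line $l\in L$ through $p_2$, not through $p_1$, with $|Q\cap l|\gg |Q|/K$, and correctly diagnoses that a naive slope count through a pivot on $l$ sees $O(K^2)$ directions and hence falls a factor of $K$ short of the target $|R|\approx K^8|P|/|L|^4$. You then say that closing this gap ``is where the real work lies'', but you do not close it, and the mechanism you sketch (selecting the $O(K)$ richest directions through a pivot and hoping the row/column balance compresses the slope set) does not in general recover the missing factor.

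The idea you are missing is that the $O(K)$-goodness at $p_3,p_4$ comes for free from the ambient line set $L$, not from the grid $B\times A$. Since $Q\subseteq P$, every $q\in Q$ is still incident to $\Theta(K)$ lines of $L$. For $p\in Q$ set $Q_p=\{q\in Q:l_{pq}\in L\}$; then $(Q_p,p)$ is \emph{automatically} $O(K)$-good, because $p$ lies on only $O(K)$ lines of $L$, and Lemma~\ref{theorem:cover1} applied to $Q$ and $L$ gives $|Q_p|\gg K^2|Q|/|L|$ for a subset $Q_1\subseteq Q$ of density $\approx 1$. On the chosen line $l^*$ one then has
\[
\sum_{p\in Q_1\cap l^*}|Q_p|\;\gg\;\frac{K^2|Q|}{|L|}\,\bigl|Q_1\cap l^*\bigr|,
\]
and Cauchy--Schwarz on $\sum_p|Q_p|=\sum_{q\in Q}\#\{p:q\in Q_p\}$ (the diagonal being negligible precisely by the hypothesis $|Q|\gg|L|/K$) yields distinct $p_3,p_4\in Q_1\cap l^*$ with
\[
|Q_{p_3}\cap Q_{p_4}|\;\gg\;\frac{K^4|Q|}{|L|^2}\;\approx\;\frac{K^8|P|}{|L|^4}.
\]
Taking $R\subseteq Q_{p_3}\cap Q_{p_4}$ finishes the proof: both $(R,p_3)$ and $(R,p_4)$ are $O(K)$-good by construction, with no slope analysis needed. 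The bound $\le |P|/K$ on supporting-line multiplicities is used not for the second moment but earlier, to guarantee that at least two of the rich lines through $p_2$ survive, so that $l^*$ can be chosen to miss $p_1$.
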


The rest of Section \ref{section:refine} is concerned with proving these propositions. Section \ref{section:refinelemmata} establishes some preliminary incidence lemmata. Section \ref{section:refine1proof} then gives the proof of Proposition \ref{theorem:refine1}, and Section \ref{section:refine2proof} gives the proof of Proposition \ref{theorem:refine2}.

\subsection{Lemmata}\label{section:refinelemmata}

We first record two standard results.

\begin{lemma}\label{theorem:pointrefine}
Let $P_1$ be the set of points in $P$ incident to at least $\frac{I(P,L)}{2|P|}$ lines in $L$. Then $I(P_1,L)\approx I(P,L)$. Similarly, if $L_1$ is the set of lines in $L$ incident to at least $\frac{I(P,L)}{2|L|}$ points in $P$ then $I(P,L_1)\approx I(P,L)$.
\end{lemma}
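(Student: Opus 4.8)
The plan is to prove the dyadic-pigeonholing statement of Lemma~\ref{theorem:pointrefine} by a standard counting argument that isolates the contribution of ``rich'' points and shows it dominates the total. I work with the first assertion; the second follows by interchanging the roles of points and lines, since incidence is symmetric in $P$ and $L$.

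\textbf{Setup.} Write $I=I(P,L)$ and for each point $q\in P$ let $d(q)$ denote the number of lines in $L$ incident to $q$, so that $I=\sum_{q\in P}d(q)$. Set the threshold $t=\frac{I}{2|P|}$, and let $P_1=\{q\in P: d(q)\ge t\}$ be the set of rich points. The goal is to show $I(P_1,L)\approx I$; since $P_1\subseteq P$ gives $I(P_1,L)\le I$ immediately, I only need the lower bound $I(P_1,L)\gg I$.

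\textbf{Main step.} First I would split the total incidence count according to whether each point is rich:
\begin{equation}
I=\sum_{q\in P}d(q)=\sum_{q\in P_1}d(q)+\sum_{q\in P\setminus P_1}d(q)=I(P_1,L)+\sum_{q\in P\setminus P_1}d(q).
\end{equation}
Now I bound the poor-point contribution: every $q\in P\setminus P_1$ satisfies $d(q)<t=\frac{I}{2|P|}$, and there are at most $|P|$ such points, so
\begin{equation}
\sum_{q\in P\setminus P_1}d(q)<|P|\cdot\frac{I}{2|P|}=\frac{I}{2}.
\end{equation}
Substituting back gives $I(P_1,L)=I-\sum_{q\in P\setminus P_1}d(q)>I-\frac{I}{2}=\frac{I}{2}$, so $I(P_1,L)\gg I$. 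Combined with the trivial upper bound this yields $I(P_1,L)\approx I$, as claimed. The second assertion is identical with the threshold $\frac{I}{2|L|}$ applied to the degrees of lines.

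\textbf{Anticipated obstacle.} There is essentially no hard step here: this is the classical ``throw away the low-degree vertices'' argument, and the only point needing mild care is that the definition of the threshold as exactly half the average degree is what forces the poor-point contribution to be at most half of $I$, guaranteeing the survivors still carry a positive proportion of the incidences. I would just double-check the strictness of the inequalities and confirm that the implicit constants in $\approx$ absorb the factor $\tfrac12$, which they do by definition of the $\Theta$ notation.
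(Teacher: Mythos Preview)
Your argument is correct and is essentially identical to the paper's proof: both split $I(P,L)$ into the contributions of rich and poor points, bound the poor contribution by $|P|\cdot \frac{I(P,L)}{2|P|}=\frac{I(P,L)}{2}$, and conclude $I(P_1,L)\ge \frac{I(P,L)}{2}$. The only cosmetic difference is that the paper names the poor set $P_2$ rather than writing $P\setminus P_1$.
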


\begin{proof}
We prove the result for points, leaving that for lines as an exercise. Let $P_2$ be the set of points in $P$ incident to at most $\frac{I(P,L)}{2|P|}$ lines in $L$. Then 

\begin{align*}I(P_2,L)&=\sum_{p \in P_2}\#\left\{l \in L \text{ incident to } p\right\}\leq |P_2| \frac{I(P,L)}{2|P|}\leq \frac{I(P,L)}{2}.
\end{align*} 

Since $I(P,L)=I(P_1,L)+I(P_2,L)$ we obtain $I(P_1,L)\geq \frac{I(P,L)}{2}$ as required.
\end{proof}

\begin{lemma}\label{theorem:pointrefine2}
Let $P_1$ be the set of points in $P$ incident to no more than $\max\left\{4,\frac{4|L|^2}{I(P,L)}\right\}$ lines in $L$. Then $I(P_1,L)\approx I(P,L)$. Similarly, if $L_1$ is the set of lines in $L$ incident to no more than $\max\left\{4,\frac{4|P|^2}{I(P,L)}\right\}$ points in $P$, then $I(P,L_1)\approx I(P,L)$.
\end{lemma}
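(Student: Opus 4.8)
The plan is to mirror the argument of Lemma \ref{theorem:pointrefine}: rather than bound $I(P_1,L)$ directly, I would show that the complementary set $P_2$ of \emph{high-multiplicity} points contributes at most half the incidences. Since $P_1\subseteq P$ gives $I(P_1,L)\leq I(P,L)$ for free, this suffices for the two-sided relation $\approx$. Write $I=I(P,L)$, let $r(p)$ denote the number of lines of $L$ through a point $p$, and set $T=\max\{4,\,4|L|^2/I\}$, so that $P_2$ consists of those $p\in P$ with $r(p)>T$. The goal is to prove $I(P_2,L)\leq I/2$, whence $I(P_1,L)=I-I(P_2,L)\geq I/2$.

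The key input is the elementary fact that two distinct lines meet in at most one point. Counting incident pairs of distinct lines through each point therefore gives
\begin{equation*}
\sum_{p\in P}\binom{r(p)}{2}\leq \binom{|L|}{2}\leq \frac{|L|^2}{2}.
\end{equation*}
Since every $p\in P_2$ has $r(p)>T\geq 4$, in particular $r(p)\geq 2$, and hence $\binom{r(p)}{2}\geq \tfrac14 r(p)^2$. Combining these two observations restricted to $P_2$ yields the second-moment bound $\sum_{p\in P_2}r(p)^2\leq 2|L|^2$.

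To convert this into a bound on the first moment $I(P_2,L)=\sum_{p\in P_2}r(p)$, I would use $r(p)>T$ to write $r(p)\leq r(p)^2/T$ on $P_2$, obtaining
\begin{equation*}
I(P_2,L)=\sum_{p\in P_2}r(p)\leq \frac1T\sum_{p\in P_2}r(p)^2\leq \frac{2|L|^2}{T}.
\end{equation*}
It then remains to check both branches of the maximum. When $T=4|L|^2/I$ this bound is exactly $I/2$; when instead $T=4$ (which happens precisely when $|L|^2<I$) the bound reads $|L|^2/2<I/2$. Either way $I(P_2,L)\leq I/2$ and the first claim follows. The statement for lines is identical after exchanging the roles of points and lines and of $|P|$ and $|L|$; here the relevant fact is that two distinct points determine at most one line, bounding $\sum_{l}\binom{s(l)}{2}\leq |P|^2/2$, where $s(l)$ is the number of points of $P$ on $l$.

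I expect no serious obstacle: the argument is a standard second-moment double count. The only point requiring care is the bookkeeping around the maximum in the threshold — in particular that the constant $4$ is chosen both to guarantee $r(p)\geq 2$ (so that the step $\binom{r(p)}{2}\geq\tfrac14 r(p)^2$ is valid) and to make the final constant come out as the clean factor $1/2$.
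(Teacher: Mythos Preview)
Your proposal is correct and follows essentially the same approach as the paper: bound the second moment $\sum_{p\in P_2} r(p)^2$ by roughly $|L|^2$ using the fact that two distinct lines meet in at most one point, then convert to a first-moment bound via $r(p)\le r(p)^2/T$ on the high-multiplicity set. The only cosmetic difference is that the paper packages the second-moment estimate as $\sum_p r(p)^2 \le I(P,L)+|L|^2$ (separating the diagonal $l_1=l_2$ from the off-diagonal), whereas you route it through $\sum_p\binom{r(p)}{2}\le\binom{|L|}{2}$ and the inequality $\binom{r}{2}\ge r^2/4$ for $r\ge 2$; both lead to the same $I(P_2,L)\le I/2$.
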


\begin{proof}
We prove the result for points, leaving that for lines as an exercise. Let $\lambda=\max\left\{4,\frac{4|L|^2}{I(P,L)}\right\}$ and let $P_2$ be the set of points incident to at least $\lambda$ lines in $L$. Then we have

\begin{align*}
I(P_2,L)&=\sum_{p \in P_2}\sum_{l \in L}\delta_{pl}\\
&\leq \frac{1}{\lambda}\sum_{p \in P_2}\sum_{l_1,l_2 \in L}\delta_{pl}\\
&\leq \frac{1}{\lambda}\left(I(P,L)+|L|^2\right)\\
& \leq \frac{I(P,L)}{2}.
\end{align*}

Since $I(P,L)=I(P_1,L)+I(P_2,L)$ we obtain $I(P_1,L)\geq \frac{I(P,L)}{2}$ as required.
\end{proof}

For points $p,q$ in the plane, let $l_{pq}$ be the line determined by $p$ and $q$. We use Lemma \ref{theorem:pointrefine} to establish the following useful result.

\begin{lemma}\label{theorem:cover1}
Let $P$ be a set of points and $L$ a set of lines, such that every point is incident to $\Theta(K)$ lines in $L$. For each $p \in P$, define $P_p=\left\{q \in P:l_{pq}\in L\right\}$. Then there exists $P_1 \subseteq P$ with $|P_1|\approx |P|$ such that $|P_p|\gg \frac{K^2|P|}{|L|}$ for each $p \in P_1$.
\end{lemma}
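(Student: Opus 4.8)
The plan is to apply Lemma~\ref{theorem:pointrefine} twice, first to thin out the lines and then to thin out the points, and finally to read off the lower bound on $|P_p|$ from the resulting richness. Throughout I write $n_l = |P \cap l|$ for the number of points of $P$ on a line $l$, and I record at the outset that the global incidence count is $I(P,L) = \sum_{p \in P} \#\{l \in L : p \in l\} \approx K|P|$, since by hypothesis every point of $P$ meets $\Theta(K)$ lines of $L$.

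First I would apply the line version of Lemma~\ref{theorem:pointrefine} to pass to the set $L_1 \subseteq L$ of lines incident to at least $\frac{I(P,L)}{2|L|}$ points of $P$. This preserves the incidence count, $I(P,L_1) \approx I(P,L) \approx K|P|$, and guarantees $n_l \gtrsim \frac{K|P|}{|L|}$ for every $l \in L_1$. Then I would apply the point version of Lemma~\ref{theorem:pointrefine} to the pair $(P,L_1)$, obtaining $P_1 \subseteq P$ with $I(P_1,L_1) \approx I(P,L_1) \approx K|P|$ in which every point lies on $\gtrsim K$ lines of $L_1$.

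The step I expect to be the crux is upgrading the richness statement $I(P_1,L_1) \approx K|P|$ to the cardinality statement $|P_1| \approx |P|$ that the lemma demands. Here I would use the hypothesis in its full, two-sided strength: because every point of $P$ lies on $\Theta(K)$ lines of $L$ and $L_1 \subseteq L$, every point of $P_1$ lies on $O(K)$ lines of $L_1$, so $K|P| \approx I(P_1,L_1) \leq |P_1| \cdot O(K)$ forces $|P_1| \gtrsim |P|$, whence $|P_1| \approx |P|$ as $P_1 \subseteq P$. The lower bound on the number of lines through a point drives the incidence count up, while the matching upper bound converts incidences back into distinct points; it is the interplay of the two that makes the conclusion about $|P_1|$ possible.

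Finally, for each $p \in P_1$ I would bound $|P_p|$ from below. Since distinct lines through $p$ meet only at $p$, each $q \in P_p$ lies on exactly one line of $L$ through $p$, giving $|P_p| = \sum_{l \ni p,\, l \in L}(n_l - 1) \geq \sum_{l \ni p,\, l \in L_1}(n_l - 1)$. There are $\gtrsim K$ lines in the last sum and each contributes $n_l - 1 \gtrsim \frac{K|P|}{|L|}$, so $|P_p| \gtrsim \frac{K^2|P|}{|L|}$, as required. The only point needing care is the passage $n_l - 1 \gtrsim n_l$, valid once $\frac{K|P|}{|L|}$ exceeds an absolute constant; outside that range the target bound is within a constant factor of the trivial one and can be treated separately.
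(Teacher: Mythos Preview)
Your proposal is correct and follows essentially the same route as the paper: apply Lemma~\ref{theorem:pointrefine} first to pass to rich lines $L_1$, then to rich points $P_1$, use the $O(K)$ upper bound on lines through a point to convert $I(P_1,L_1)\approx K|P|$ into $|P_1|\approx |P|$, and finally multiply richness of points by richness of lines to get $|P_p|\gg K^2|P|/|L|$. Your treatment of the final count (writing $|P_p|=\sum_{l\ni p,\,l\in L}(n_l-1)$ and noting the harmless $-1$) is in fact slightly more careful than the paper's; just be aware that in this paper the symbol $\gtrsim$ is reserved for ``up to logarithmic losses'', so your inequalities should be written with $\gg$.
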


\begin{proof}
Since every point in $P$ is incident to $\Theta(K)$ lines in $L$ we may write 

$$I(P,L)\approx K|P|.$$

Let $L_1$ be the set of lines in $L$ incident to $\Omega\left(\frac{I(P,L)}{|L|}\right)=\Omega\left(\frac{K|P|}{|L|}\right)$ points in $P$. By Lemma \ref{theorem:pointrefine} we have $I(P,L_1)\approx I(P,L)\approx K|P|$. Now let $P_1$ be the set of points in $P$ incident to $\Omega\left(\frac{I(P,L_1)}{|P|}\right)=\Omega\left(K\right)$ lines in $L_1$. By Lemma \ref{theorem:pointrefine} we have 

\begin{equation}\label{eq:incidences1}
I(P_1,L_1)\approx I(P,L_1)\approx K|P|.
\end{equation}

Since $P_1 \subseteq P$ and $L_1 \subseteq L$ we know that each point in $P_1$ is incident to at most $O(K)$ lines in $L_1$. So we have 

\begin{equation}\label{eq:incidences2}
I(P_1,L_1)\ll K|P_1|.
\end{equation}

Comparing (\ref{eq:incidences1}) and (\ref{eq:incidences2}) we see that $|P_1|\gg |P|$, and so $|P_1|\approx |P|$ since $P_1$ is a subset of $P$. Now, each $p \in P_1$ is incident to $\Omega\left(K\right)$ lines in $L_1$. And each line in $L_1$ is incident to $\Omega\left(\frac{K|P|}{|L|}\right)$ points in $P$. So we have $|P_p|\gg \frac{K^2|P|}{|L|}$ for each $p \in P_1$.
\end{proof}

\subsection{Proof of Proposition \ref{theorem:refine1}}\label{section:refine1proof}
By Lemma \ref{theorem:cover1} there exists $P_1 \subseteq P$ with $|P_1|\approx |P|$ such that $|P_p|\gg \frac{K^2|P|}{|L|}\gg 1$ for each $p \in P_1$. In particular we can find a single $p_1 \in P$ such that $|P_{p_1}|\gg \frac{K^2|P|}{|L|}$. Applying Lemma \ref{theorem:cover1} again, this time to $P_{p_1}$ and $L$, we can find $p_2 \in P_{p_1}$ such that 

$$|P_{p_1}\cap P_{p_2}|\gg \frac{K^2|P_{p_1}|}{|L|}\gg \frac{K^4|P|}{|L|}.$$

Note that $P_{p_1}$ is the set of points incident to lines that are themselves incident to $p_1$. Since $p_1$ is incident to only $O(K)$ lines in $L$ this means that $\left(P_{p_1},p_1\right)$ is $O(K)$-good. Similarly, $\left(P_{p_2},p_2\right)$ is $O(K)$-good. So $\left(P_{p_1}\cap P_{p_2},p_1\right)$ and $\left(P_{p_1}\cap P_{p_2},p_2\right)$ are both $O(K)$-good. We take $Q$ to be an appropriately-sized subset of $P_{p_1}\cap P_{p_2}$.
\qed

\subsection{Proof of Proposition \ref{theorem:refine2}}\label{section:refine2proof}
Since $Q \subseteq P$ we know that every point in $Q$ is incident to $\Theta(K)$ lines in $L$. So by Lemma \ref{theorem:cover1} there exists $Q_1 \subseteq Q$ with $|Q_1|\approx |Q|$ such that $|Q_p|\gg \frac{K^2|Q|}{|L|}$ for each $p \in Q_1$. Now, $Q_1$ is contained in $Q$ and so $(Q_1,p_2)$ is $O(K)$-good, i.e. $Q_1$ is supported over $O(K)$ lines in $L$ that are incident to $p_2$. Let $J \subseteq L$ be this set of $O(K)$ supporting lines. We have $I(Q_1,J)=|Q_1|$ and $|J|\ll K$. Let $J_1$ be the set of $l \in J$ incident to at least $\Omega\left(\frac{|Q|}{K}\right)$ points in $Q_1$. By Lemma \ref{theorem:pointrefine} we know that 

\begin{equation}\label{eq:incidences11}
I(Q_1,J_1)\approx I(Q_1,J)\approx |Q|.
\end{equation}

But since $Q_1\subseteq P$ and $J_1 \subseteq L$, we know that each line in $J_1$ is incident to at most $O\left(\frac{|P|}{K}\right)$ points in $Q$. So we have

\begin{equation}\label{eq:incidences12}
I(Q_1,J_1)\ll \frac{|J_1||P|}{K}.
\end{equation}

Comparing (\ref{eq:incidences11}) and (\ref{eq:incidences12}) gives $|J_1|\gg \frac{|Q|K}{|P|}$. Since $|Q|\gg |P|/K$ by hypothesis we know, by appropriate choice of constants in the statement of the theorem, that $|J_1| \geq 2$. Since $|J_1|\geq 2$ and all lines in $J_1$ are incident to $p_2$ we know that there is at least one line in $J_1$ that is not incident to $p_1$. Fix this line $l^*$, so that $|Q \cap l^*|\gg \frac{|Q|}{K}$, and in particular $|Q \cap l^*|$ has at least, say, $100$ elements since $|Q|\gg K$. We have

\begin{equation}\label{eq:incidences6}
\frac{K^2|Q|}{|L|}\left|Q \cap l^*\right|\ll \sum_{p \in Q \cap l^*}|Q_p|.
\end{equation}

On the other hand by Cauchy-Schwarz we have 

\begin{align*}
\sum_{p \in Q \cap l^*}|Q_p|& \leq |Q|^{1/2}\left(\sum_{p_3,p_4 \in Q\cap l^*}\left|Q_{p_3}\cap Q_{p_4}\right|\right)^{1/2}\\
&= |Q|^{1/2}\left( \sum_{p \in Q \cap l^*}|Q_p|+ \sum_{p_3\neq p_4 \in Q\cap l^*}\left|Q_{p_3}\cap Q_{p_4}\right|\right)^{1/2}
\end{align*}

If the first summation on the right dominates then we have 

$$\sum_{p \in Q \cap l^*}|Q_p|\ll |Q|.$$

Since $|Q \cap l^*|\gg \frac{|Q|}{K}$ and $|Q_p|\gg \frac{K^2|Q|}{|L|}$ we then obtain $|Q|K \gg |L|$ by comparison with (\ref{eq:incidences6}), contradicting the hypothesis $|Q| \ll |L|/K$. So the second summation on the right dominates and we have instead 

$$\frac{K^4 |Q| \left|Q \cap l_*\right|^2}{|L|^2} \ll \sum_{p_3 \neq p_4 \in Q\cap l^*}|Q_{p_3}\cap Q_{p_4}|$$

so there exist distinct $p_3,p_4 \in Q \cap l^*$ such that 

$$|Q_{p_3}\cap Q_{p_4}|\gg \frac{K^4 |Q|}{|L|^2}\gg \frac{K^8|P|}{|L|^4}.$$

We take $R$ to be an appropriately-sized subset of $Q_{p_3}\cap Q_{p_4}$. Since $p_2,p_3,p_4 \in l^*$ and $p_1 \notin l^*$ we have the required result.
\qed

\section{Interpretation as partial sum-products}\label{section:interpretation}

If $A$ and $B$ are subsets of a field, then we write 

$$A+B=\left\{a+b:a \in A, b \in B\right\}.$$

If $G \subseteq A \times B$ then we write

$$A\stackrel{G}{+}B=\left\{a+b:(a,b)\in G\right\}.$$

We extend these definitions to the operations of multiplication, subtraction and division. This section shows that the configuration of points described in Proposition \ref{theorem:refine2} implies the existence of sets $A,B$ and $G \subseteq A \times B$ such that $|A\overset{G}-B|,|A\overset{G}/B| \ll |A|,|B|$. If there are many points configured in this way then $G$ is large. 

\begin{proposition}\label{theorem:reduction}
Suppose $P$ is a set of points in the plane over a field $F$ and $p_1,p_2,p_3,p_4 \notin P$ are points in the plane such that $(P,p_i)$ is $K_i$-good for each $i$. Suppose moreover that $p_2,p_3$ and $p_4$ are colinear on a line $l$ that is not incident to $p_1$ nor to any of the points in $P$. 

Then there exist sets $A,B \subseteq F$ with $|A|\leq K_3$, $|B|\leq K_4$, and $G \subseteq A \times B$ with $|G|=|P|$ such that $|A \overset{G}-B| \leq K_2$ and $|A \overset{G}/B| \leq K_1$.
\end{proposition}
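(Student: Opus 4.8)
The plan is to choose a projective transformation $\phi$ of $\mathbb{P}^2$ that sends the line $l$ to the line at infinity and places the four distinguished points at standard positions, after which the two good-ness conditions through $p_1$ and $p_2$ become one multiplicative and one additive constraint on a grid. Concretely, I would seek $\phi$ with $\phi(p_3)=[0:1:0]$, $\phi(p_4)=[1:0:0]$, $\phi(p_2)=[1:1:0]$ and $\phi(p_1)=(0,0)$. The points $p_2,p_3,p_4$ are collinear on $l$ while $p_1\notin l$, and the three prescribed images lie on the line at infinity while the fourth does not; since source and target configurations have the same collinearity type they are projectively equivalent, so such a $\phi$ exists (and is unique up to the dilations, which fix the origin and every point at infinity). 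As $\phi$ preserves all incidences I may replace $P$ by $\phi(P)$ and work in affine coordinates, noting that every point of $P$ has finite image because $P\cap l=\emptyset$, and no image equals the origin because $p_1\notin P$.

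Next I would read off the sets. Put $A=\{x\text{-coordinate of }\phi(q):q\in P\}$, $B=\{y\text{-coordinate of }\phi(q):q\in P\}$, and $G=\{(x_q,y_q):q\in P\}\subseteq A\times B$. Lines through $p_3$ map to the vertical lines $x=\text{const}$, so the $K_3$-good hypothesis gives $|A|\le K_3$; likewise lines through $p_4$ map to horizontal lines and $|B|\le K_4$. The map $q\mapsto(x_q,y_q)$ is injective: the line $\overline{q p_3}$ and the line $\overline{q p_4}$ are distinct (a common line would pass through both $p_3$ and $p_4$, hence be $l$, contradicting $q\notin l$), so they meet only in $q$, and two points of $P$ with the same coordinate pair would have to coincide. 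Hence $|G|=|P|$. It then remains only to identify the two operations: lines through $p_2$ map to the slope-one lines $y-x=\text{const}$, so the $K_2$-good hypothesis says $y_q-x_q$ takes at most $K_2$ values over $G$, giving $|A\overset{G}{-}B|\le K_2$ (unchanged under negation), while lines through $p_1=(0,0)$ map to the central lines $y=mx$, so the $K_1$-good hypothesis says the slope takes at most $K_1$ values, giving $|A\overset{G}{/}B|\le K_1$.

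The step I expect to require the most care is the construction of $\phi$ together with its interaction with the division. First, because $p_2,p_3,p_4$ are collinear the source quadruple is \emph{degenerate}, so I must verify that it is nonetheless projectively equivalent to the target quadruple and that a representative can be exhibited over $\mathbb{F}_p$; this is exactly where the hypotheses $p_i\notin P$, $p_1\notin l$ and $P\cap l=\emptyset$ are consumed, to guarantee finite, distinct, origin-avoiding images. Second, the ratio set must be read over pairs with nonzero denominator: a point of $P$ lying on the line $\overline{p_1 p_4}$ (which maps to the $x$-axis) produces $b=0$, and the corresponding quotient is simply omitted from $A\overset{G}{/}B$. This omission is harmless for the stated inequality, since those points all lie on the single central line of slope zero and $|A\overset{G}{/}B|\le K_1$ is only an upper bound on the number of central lines meeting $P$; crucially the full grid $G$ still satisfies $|G|=|P|$.

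Conceptually, then, the heart of the argument — rather than the essentially formal reading-off of $A$, $B$, $G$ and the two operations — is the observation that the collinearity of $p_2,p_3,p_4$ is precisely what permits all three points to be sent to the line at infinity, converting their good-ness into the two coordinate constraints and the difference constraint, while $p_1$, as the unique point off $l$, is forced to a finite position and supplies the multiplicative (ratio) constraint. Once $\phi$ is in hand the bounds follow by directly counting the pencil lines through each $p_i$.
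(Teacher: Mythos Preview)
Your proof is correct and follows essentially the same route as the paper: a projective transformation sending $l$ to the line at infinity and $p_1$ to the origin, after which the four good-ness conditions become two coordinate constraints, one additive constraint, and one multiplicative constraint. The only difference is that you additionally normalise $\phi(p_2)=[1:1:0]$ up front, whereas the paper leaves $\tau(p_2)$ at an arbitrary point at infinity with slope $\lambda$ and absorbs $\lambda$ at the end by replacing $B$ with $\lambda B$; your version simply folds that rescaling into the choice of transformation.
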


\begin{proof}
Let $\tau$ be a projective transformation of the plane that sends $p_3$ and $p_4$ to the line at infinity, so that lines through $\tau(p_3)$ are parallel to the vertical axis and lines through $\tau(p_4)$ are parallel to the horizontal axis, and sends $p_1$ to the origin. Define $G=\tau(P)\subseteq F^2$.

The set $G$ is supported over $K_3$ vertical lines $K_4$ horizontal lines. Let $A$ be the set of $x$-intercepts of the vertical lines and $B$ be the set of $y$-interecepts of the hortizontal lines, so that $G \subseteq A \times B$ and $|A|\leq K_3$, $|B|\leq K_4$. 

Furthermore, $G$ is supported over $K_1$ lines through the origin. We note that these are identified by their gradient, and that a point $(a,b)\in F^2$ is incident to the line with gradient $\xi$ if and only if $\frac{a}{b}=\xi$. We therefore have $|A \overset{G}/ B|\leq K_1$. 

Finally, $G$ is supported over $K_2$ lines through $\tau(p_2)$. Since $\tau$ sends $p_3$ and $p_4$ to the line at infinity, and $p_2$ is colinear with these points, we know that $\tau(p_2)$ lies on the line at infinity. Say that all lines incident to $\tau(p_2)$ have gradient $\lambda \in F$. These are identified by the intercept, and a point $(a,b)\in F^2$ is incident to the line with gradient $\rho$ if and only if $a+\lambda b = \rho$. We therefore have $|A \overset{G}- \lambda B|\leq K_2$.  

We now let $B'= \lambda B$ and $G'=\left\{(a,\lambda b):(a,b)\in G\right\}$ to obtain $|G'|=|G|=|P|$, $|A|\leq K_3$, $|B'|=|B|\leq K_4$, $|A \overset{G'}- B'|\leq K_1$ and $|A \overset{G}/ B'|\leq K_2$.
\end{proof}

\section{Bounding partial sum-products}\label{section:bounding}

From Sections \ref{section:refine} and \ref{section:interpretation} we know that if too many points in $P$ are incident to too many lines in $L$ then there exist $A,B \subseteq \mathbb{F}_p$ and a large $G \subseteq A \times B$ such that $|A\overset{G}-B|,|A\overset{G}/B|$ are both small relative to $|A|$ and $|B|$. 

In this section, we will show that this is not possible, in the following sense:

\begin{proposition}\label{theorem:partialsumprod}
If $A,B \subseteq \mathbb{F}_p$ and $G \subseteq A \times B$ with $|G| \leq p |B|$ then we have 

$$|G|^{55}\ll |A|^{36}|B|^{37}|A \overset{G}-B|^{28}|A \overset{G}/ B|^8.$$
\end{proposition}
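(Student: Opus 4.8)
The plan is to reinterpret the hypothesis geometrically and then extract genuine sum--product structure from it. Writing $\mathcal{D}=A\overset{G}{-}B$ and $\mathcal{Q}=A\overset{G}{/}B$, with $D=|\mathcal{D}|$ and $Q=|\mathcal{Q}|$, and placing $G$ in $\mathbb{F}_p^2$, the set $G$ is a point set covered by $|A|$ vertical lines and $|B|$ horizontal lines (since $G\subseteq A\times B$), by $D$ lines of slope one (the level sets of $a-b$), and by $Q$ lines through the origin (the level sets of $a/b$). The goal is thus to show that a point set admitting such an economical covering by four line families cannot be too large, which is exactly a sum--product phenomenon. The role of the hypothesis $|G|\le p|B|$ is to keep us below the threshold at which $\mathbb{F}_p$ sum--product estimates become available, so this condition should be preserved throughout.

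The first concrete step I would take is a reparametrisation that converts the difference information into a multiplicative count. Grouping the pairs of $G$ according to their common ratio $q$, a pair $(a,b)$ with $a/b=q$ has $a-b=(q-1)b$, so the fibre $B_q=\{b:(a,b)\in G,\ a/b=q\}$ satisfies $(q-1)B_q\subseteq\mathcal{D}$. Since also $B_q\subseteq B$, summing over the $Q$ ratios gives $|G|=\sum_q|B_q|\le |\{(b,q')\in B\times(\mathcal{Q}-1): bq'\in\mathcal{D}\}|$, and the identical argument run through the $a$--coordinate (using $a-b=a(q-1)/q$) produces the symmetric bound involving $A$ and the set $\{(q-1)/q:q\in\mathcal{Q}\}$. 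Thus $|G|$ is controlled by a restricted multiplicative count between $B$ (resp.\ $A$), a shift of the quotient set, and the difference set, which I would then estimate by Cauchy--Schwarz against the multiplicative energies of the sets involved, bringing $D$, $Q$ and the ambient sizes into play.

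The heart of the argument is to upgrade this \emph{partial} structure to honest sumset and product-set bounds on large subsets. After a dyadic pigeonholing step that regularises the fibre sizes of $G$, I would apply the Balog--Szemer\'edi--Gowers theorem multiplicatively --- exploiting the smallness of the restricted quotient set --- to locate $A_0\subseteq A$ and $B_0\subseteq B$, each a polynomial fraction of the relevant size, carrying a genuinely small quotient set $|A_0/B_0|$; the Pl\"unnecke--Ruzsa inequalities then convert this into control of the iterated products and quotients that the sum--product input requires. Feeding the resulting configuration --- a large set carrying both small additive (difference) and small multiplicative (quotient) structure --- into a quantitative $\mathbb{F}_p$ sum--product or energy estimate (valid precisely in the sub-$p$ regime encoded by $|G|\le p|B|$), and optimising the several free parameters introduced by the refinements, should yield a polynomial inequality of exactly the stated shape.

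The main obstacle, and where essentially all of the work lives, is twofold. First, the additive and multiplicative structure must be forced to coexist on a single common large subset: each Balog--Szemer\'edi--Gowers and Pl\"unnecke--Ruzsa pass costs a polynomial factor and naturally acts on a different refinement of $A$ and $B$, so threading them so that the same $A_0,B_0$ inherit both kinds of structure is delicate. Second, the exponent vector $(36,37,28,8)$ over $55$ is not canonical but is the numerical output of concatenating these inequalities and choosing the dyadic and refinement parameters optimally; the bookkeeping that tracks every loss along the chain, while never violating the threshold recorded in $|G|\le p|B|$, is what pins down those constants, and arranging the optimisation so that the inequality closes with these specific exponents is the crux.
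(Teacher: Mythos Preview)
Your proposal correctly identifies the overall shape --- pass from partial to honest sum--product data on a large subset, then invoke an $\mathbb{F}_p$ sum--product estimate --- but it does not close, and it diverges from the paper at the crucial point. The gap is exactly the one you flag as the ``main obstacle'' and then leave unresolved: after applying BSG \emph{multiplicatively} to obtain $A_0,B_0$ with small $|A_0/B_0|$, there is no reason the restricted difference information survives on $A_0\times B_0$, and Pl\"unnecke--Ruzsa (which operates within a single group operation) does nothing to recover it. Your fibre identity $(q-1)B_q\subseteq\mathcal D$ is a pleasant observation but is not used in the paper and does not by itself link the two structures.

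The paper's resolution is simpler and deliberately asymmetric. A single ``paths of length two'' refinement (Lemma~\ref{theorem:pathsapp}) produces $A'\subseteq A$ with $|A'|\gg |G|/|B|$ together with \emph{one} dense graph $H\subseteq A'\times A'$ on which both $|A'\overset{H}{-}A'|$ and $|A'\overset{H}{/}A'|$ are simultaneously small; this is how coexistence is forced. The two sides are then handled differently: additively, a short completion step (Corollary~\ref{theorem:BKTexercise2}) upgrades the partial difference to a full bound $|A'-A'|\ll |A\overset{G}{-}B|^4|A|^4|B|^3/|G|^5$; multiplicatively, one applies only Cauchy--Schwarz, $E_\times(A')\ge |H|^2/|A'\overset{H}{/}A'|$, to lower-bound the multiplicative energy directly --- no second BSG pass, no Pl\"unnecke--Ruzsa. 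Substituting both into Rudnev's inequality $E_\times(A')^4\lesssim|A'-A'|^7|A'|^4$ and eliminating $|A'|$ via $|A'|\gg|G|/|B|$ yields the exponents $(36,37,28,8;55)$ immediately, with no optimisation over free parameters. The asymmetric treatment is precisely what produces the lopsided exponents $28$ versus $8$ on the difference and quotient sets, and is the idea your outline is missing.
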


Finite field sum-product estimates are the driving force behind Proposition \ref{theorem:partialsumprod}. These assert that 
 $\max\left\{|A+A|,|A\cdot A|\right\}$ must always be large relative to the cardinality of $A$. We will make use of the most-recent finite field sum-product estimate, due to Rudnev \cite{rudnev}, who showed that $\max\left\{|A+A|,|A\cdot A|\right\} \gtrsim |A|^{12/11}$. As Rudnev notes, the result makes use of the multiplicative energy $E_{\times}(A)$, which is the number of solutions to $ab=cd$ with $a,b,c,d \in A$ rather than the product set itself, and extends perfectly well to difference sets rather than sumsets. We will adopt the following formulation of Rudnev's result.
 
\begin{lemma}[Rudnev]\label{theorem:rudnev}
If $A \subseteq \mathbb{F}_p$ and $|A|<p^{1/2}$ then $E_{\times}(A)^4 \lesssim |A-A|^7 |A|^4.$
\end{lemma}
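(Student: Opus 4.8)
The plan is to derive this energy estimate from the point--plane incidence theorem of \cite{rudnev} that underlies the sum--product bound quoted just above it: for a set $\mathcal P$ of points and a set $\Pi$ of planes in $\mathbb F_p^3$ with $|\mathcal P|\le|\Pi|$, $|\mathcal P|=O(p^2)$, and with at most $k$ of the points lying on any single line, one has $I(\mathcal P,\Pi)\ll|\Pi|\,|\mathcal P|^{1/2}+k|\Pi|$. Before applying it I would discard $0$ from $A$, which decreases $E_{\times}(A)$ by only $O(|A|^2)$ and does not enlarge $A-A$, so that every ratio appearing below is well defined; since $|A-A|\ge|A|$ this loss is harmless against the claimed bound. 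I would then record the reformulation
$$E_{\times}(A)=\#\{(a,b,c,d)\in A^4:ab=cd\}=\sum_{\lambda}r(\lambda)^2,\qquad r(\lambda)=\#\{(a,b)\in A^2:a=\lambda b\},$$
so that $E_{\times}(A)$ counts ordered pairs of points of the grid $A\times A$ that are collinear with the origin.

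Next I would convert this collision count into a point--plane incidence count in which the difference set enters. The mechanism is to introduce an auxiliary pivot/translation variable and apply Cauchy--Schwarz to the slope-representation function $r(\lambda)$, turning the second moment $\sum_\lambda r(\lambda)^2$ into the number of solutions of a homogeneous equation whose free coordinates are forced to range over $A-A$ rather than over $A$. Reading the resulting equation as an incidence between an explicit point set $\mathcal P$, whose coordinates are drawn from $A$ and $A-A$, and a family $\Pi$ of planes indexed by elements (or ratios) of $A$, I would arrange that $I(\mathcal P,\Pi)\gtrsim E_{\times}(A)^2/|A|^2$ after stripping off the diagonal and the other degenerate contributions. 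A computation of the sizes of $\mathcal P$ and $\Pi$ followed by the optimisation of Rudnev's two-term bound should then return $E_{\times}(A)^2\lesssim|A-A|^{7/2}|A|^2$, and squaring yields the stated inequality $E_{\times}(A)^4\lesssim|A-A|^7|A|^4$.

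Two hypotheses of the incidence theorem must be verified. The cardinality restriction $|\mathcal P|=O(p^2)$ is exactly where $|A|<p^{1/2}$ is used: the construction arranges $|\mathcal P|=O(|A|^4)$ (each coordinate essentially ranging over $A$ or over $A-A$, and $|A-A|\le|A|^2<p$), and $|A|<p^{1/2}$ gives $|A|^4<p^2$ precisely. The second hypothesis concerns the collinearity parameter $k$: I must show that no line of $\mathbb F_p^3$ carries too many of the constructed points, so that the error term $k|\Pi|$ is dominated by the main term $|\Pi|\,|\mathcal P|^{1/2}$.

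The hard part is the geometric encoding in the second step: producing point and plane sets for which the incidence count simultaneously dominates the energy with the correct exponents and admits a good collinearity bound. Controlling $k$ amounts to showing that a line through many of the points would pin down the free $A$- and $(A-A)$-parameters to a one-dimensional family; this should follow from an algebraic-degeneracy argument, but it is the most delicate step and is what governs whether the clean exponent $7/4$ survives. The degenerate pieces removed along the way --- the diagonal $a=c$, zero coordinates, and lines through the origin --- must each be checked to be of lower order, and the dyadic pigeonholing used to pass from the energy to a single incidence count is what produces the logarithmic factors concealed in the $\lesssim$ of the statement.
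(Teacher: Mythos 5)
You should first note that the paper does not prove this lemma at all: it is imported verbatim from \cite{rudnev} (``We will adopt the following formulation of Rudnev's result''), so there is no internal proof to compare against. More importantly, your proposal has a genuine gap: the entire crux of the argument is deferred rather than carried out. You never define the point set $\mathcal{P}$ or the plane family $\Pi$, never derive the claimed lower bound $I(\mathcal{P},\Pi)\gtrsim E_{\times}(A)^2/|A|^2$, and never establish the collinearity bound $k$ --- and you concede yourself that this encoding ``is the most delicate step and is what governs whether the clean exponent $7/4$ survives.'' As written, the exponent $7/4$ is reverse-engineered from the statement rather than produced by any construction: for the Rudnev-type bound $I(\mathcal{P},\Pi)\ll|\Pi|\,|\mathcal{P}|^{1/2}+k|\Pi|$ to return $E_{\times}(A)\lesssim |A-A|^{7/4}|A|$, the main term would have to satisfy $|\Pi|\,|\mathcal{P}|^{1/2}\approx |A-A|^{7/2}$ against your claimed incidence lower bound, and no natural choice of $\mathcal{P}$ and $\Pi$ with coordinates in $A$ and $A-A$ yields that numerology. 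Since everything substantive is behind ``should then return'' and ``should follow from an algebraic-degeneracy argument,'' this is a research plan, not a proof.

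There is also an attribution problem that matters for the plan's coherence: the cited paper \cite{rudnev} (the source of the $12/11$ sum--product bound and of this energy--difference inequality) predates and does not contain the point--plane incidence theorem you invoke; that theorem is a later result of Rudnev, and the actual proof of the present lemma proceeds by additive-combinatorial covering and pivot arguments of Katz--Shen/Bourgain--Garaev type, with no three-dimensional incidence geometry. Your strategy is nonetheless mathematically viable with the later point--plane theorem: the known consequence (due to Roche-Newton, Rudnev and Shkredov) is the \emph{stronger} estimate $E_{\times}(A)\lesssim |A-A|^{3/2}|A|$ in the relevant range, which implies the stated lemma immediately since $E_{\times}(A)^4\lesssim|A-A|^6|A|^4\leq|A-A|^7|A|^4$. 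So if you completed your sketch correctly you would likely overshoot the target $7/4$ and land at $3/2$; but to make the argument stand you must actually exhibit the point and plane sets, prove the incidence lower bound against the energy, verify $|\mathcal{P}|=O(p^2)$ and $|\mathcal{P}|\leq|\Pi|$, and control the maximal number of collinear points --- none of which is done here.
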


Lemma \ref{theorem:rudnev} concerns the \textit{complete} difference set $A-A$, but we have control of only the partial difference set $A\overset{G}-B$. We prove the following lemma, which yields a Balog-Szemer\'edi-Gowers type estimate for sumsets and a more efficient bound for multiplicative energy. 

\begin{lemma}\label{theorem:halfbsg}
If $A,B \subseteq \mathbb{F}_p$ and $G \subseteq A \times B$ then there exists $A' \subseteq A$ with $|A'|\gg \frac{|G|}{|B|}$ such that 
\begin{enumerate}
\item $|A'-A'|\ll \frac{|A\overset{G}-B|^4|A|^4|B|^3}{|G|^5}$
\item $E_{\times}(A') \gg \frac{|G|^6}{|B|^5|A|^2|A\overset{G}/B|^2}$.
\end{enumerate}
\end{lemma}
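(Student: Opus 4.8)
The plan is to regard $G$ as a bipartite graph between $A$ and $B$ and to run a Balog--Szemer\'edi--Gowers type argument on it, extracting a single set $A'$ that serves both conclusions simultaneously. Write $E=|G|$, $D=A\overset{G}-B$, $Q=A\overset{G}/B$, and for $a,a'\in A$ let $m(a,a')$ be the number of common neighbours of $a$ and $a'$ in $B$, that is the number of $b$ with $(a,b),(a',b)\in G$. Everything rests on two elementary identities: if $b$ is a common neighbour of $a$ and $a'$ then
$$a-a'=(a-b)-(a'-b)\in D-D,\qquad \frac{a}{a'}=\frac{a/b}{a'/b}\in Q/Q.$$
Thus each common neighbour of a pair records the difference $a-a'$ as an element of $D-D$ and the quotient $a/a'$ as an element of $Q/Q$. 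The whole strategy is to produce a large $A'$ in which every pair has many common neighbours, and then read these identities additively (to bound $|A'-A'|$, using that $D$ is small) and multiplicatively (to bound $E_\times(A')$ from below, using that $Q$ is small).

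First I would count cherries. By Cauchy--Schwarz $\sum_{b\in B}|N_A(b)|^2\ge E^2/|B|$, so the number of paths of length two through $B$ is $\gg E^2/|B|$, and the average pair $(a,a')\in A\times A$ has $\gg E^2/(|A|^2|B|)$ common neighbours. The target is a set $A'\subseteq A$ with $|A'|\gg E/|B|$ in which \emph{every} pair has $\gg E^2/(|A|^2|B|)$ common neighbours, i.e. in which the average richness is realised uniformly. To obtain it I would first dyadically regularise the degrees so that the $A$-degrees and the relevant $B$-degrees are each comparable to their averages $E/|A|$ and $E/|B|$, then pivot: choose $a_0\in A$ with $\sum_a m(a,a_0)\gg E^2/(|A||B|)$ and pass to the popular neighbours of $a_0$, after which the standard Balog--Szemer\'edi cleaning step is used to discard the few pairs that remain poorly connected. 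The regularisation is what pins the size at $|A'|\gg E/|B|$, since $m(a,a_0)$ is then bounded by the common degree $E/|A|$.

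With such an $A'$ in hand, Part 1 is a counting argument. For each $x\in A'-A'$ fix one pair $a,a'\in A'$ with $a-a'=x$; its $\gg E^2/(|A|^2|B|)$ common neighbours $b$ give that many distinct pairs $(a-b,a'-b)\in D\times D$, all with difference $x$. Since a pair in $D\times D$ determines its own difference, the representations attached to distinct $x$ are disjoint, so
$$|A'-A'|\cdot\frac{E^2}{|A|^2|B|}\ll|D|^2,$$
which gives $|A'-A'|\ll |D|^2|A|^2|B|/E^2$; using that each value of $D$ has at most $\min(|A|,|B|)$ representations in $G$ (so $E\le|D|\,|A|$ and $E\le|D|\,|B|$) together with $E\le|A||B|$, this is at most the stated bound $|D|^4|A|^4|B|^3/E^5$. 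Part 2 runs the same count multiplicatively: each $x\in A'/A'$ acquires $\gg E^2/(|A|^2|B|)$ disjoint representations $(a/b,a'/b)\in Q\times Q$, whence $|A'/A'|\ll|Q|^2|A|^2|B|/E^2$; feeding this into $E_\times(A')\ge|A'|^4/|A'/A'|$ together with $|A'|\gg E/|B|$ yields $E_\times(A')\gg E^6/(|A|^2|B|^5|Q|^2)$, as required.

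The main obstacle is the localisation in the second paragraph: obtaining a single $A'$ that is at once \emph{large}, $|A'|\gg E/|B|$, and \emph{uniformly rich}, with every surviving pair joined by $\gg E^2/(|A|^2|B|)$ common neighbours. A naive neighbourhood $N_A(b_0)$ has the right size but guarantees only one common neighbour per pair, while forcing many common neighbours by intersecting several neighbourhoods shrinks the set below $E/|B|$; reconciling the two is exactly the content of the Balog--Szemer\'edi--Gowers theorem, and the careful choice of regularisation scales and cleaning thresholds is where the real work lies. Once the set is produced, the two conclusions are parallel Cauchy--Schwarz-and-count arguments differing only in whether one reads the common-neighbour identity additively or multiplicatively.
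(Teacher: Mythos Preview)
Your overall plan is the right one and matches the paper's, but there is a genuine gap in Part~1, precisely at the step you yourself flag as the main obstacle. The Balog--Szemer\'edi--Gowers machinery does \emph{not} deliver an $A'$ of size $\gg E/|B|$ in which \emph{every} pair has $\gg E^2/(|A|^2|B|)$ common neighbours. What it delivers (this is Lemma~\ref{theorem:pathsoflengthtwo} in the paper, and what your ``discard the few pairs'' description actually yields) is an $A'$ in which a $(1-\epsilon)$-fraction of pairs are rich. With only a dense $H\subseteq A'\times A'$ of rich pairs, your counting argument bounds the \emph{partial} difference set $|A'\overset{H}-A'|\ll |D|^2|A|^2|B|/E^2$, not the full $|A'-A'|$. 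Your subsequent weakening via $E\le |D||A|$, $E\le |D||B|$, $E\le|A||B|$ is just algebra on a bound you have not established; indeed, if the full $|A'-A'|\ll |D|^2|A|^2|B|/E^2$ were available it would amount to a BSG theorem with doubling exponent $2$, stronger than anything known.

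The paper closes this gap with a second pass (Lemma~\ref{theorem:BKTexercise1} and Corollary~\ref{theorem:BKTexercise2}): because $H$ has density $1-\epsilon$ in $A'\times A'$, one can refine to $A''\subseteq A'$ with $|A''|\ge(1-2\sqrt{\epsilon})|A'|$ and
\[
|A''-A''|\ll \frac{|A'\overset{H}-A'|^2}{|A'|}\ll \frac{|D|^4|A|^4|B|^3}{E^5}.
\]
It is this squaring of the partial bound that produces the exponents $4,4,3,5$ in the statement; they are not an artefact of slack inequalities. Your Part~2, by contrast, is essentially correct once you work with the partial quotient set: one has $E_\times(A'')\ge |H'|^2/|A''\overset{H'}/A''|\gg |A'|^4/|A'\overset{H}/A'|$, so the bound on $|A'\overset{H}/A'|$ already yields the claimed energy estimate without any second refinement. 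This asymmetry---full difference set needed for Part~1, only the partial quotient set needed for Part~2---is exactly why the paper's proof treats the two conclusions differently.
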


Implicitly the proof of the bound on $A'-A'$ in Lemma \ref{theorem:halfbsg} is the same as that of the Balog-Szemer\'edi-Gowers type result due to Bourgain and Garaev \cite{BG}, although it is presented below slightly differently. 

Combining Lemma \ref{theorem:rudnev} and Lemma \ref{theorem:halfbsg} gives the statement of Proposition \ref{theorem:partialsumprod}. The remainder of this section is therefore concerned with the proof of Lemma \ref{theorem:halfbsg}.

\subsection{Proof of Lemma \ref{theorem:halfbsg}}

We begin with some preliminary lemmata. Given $G \subseteq A \times B$ we define $N(a)$ for each $a \in A$ as $N(a)=\left\{b \in B: (a,b) \in G\right\}.$ The following result can be found in \cite{TV} and is repeated in \cite{BG}.

\begin{lemma}\label{theorem:pathsoflengthtwo}
For sets $G \subseteq A \times B$ and $\epsilon>0$ there exists $A' \subseteq A$ with $|A'|\gg_{\epsilon}\frac{|G|}{|B|}$ such that for $(1-\epsilon)|A'|^2$ pairs $(a_1,a_2)\in A' \times A'$ we have $$\left|N(a_1)\cap N(a_2)\right|\gg \frac{|G|^2}{|A|^2|B|}.$$
\end{lemma}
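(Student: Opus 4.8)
The plan is to use the second-moment (\emph{dependent random choice}) method on the bipartite graph with vertex classes $A$ and $B$ and edge set $G$. The central observation is that if I choose a vertex $b^* \in B$ uniformly at random and set $A'$ to be its neighbourhood in $A$, namely $A' = \{a \in A : (a,b^*) \in G\}$, then a pair $(a_1,a_2)$ lies in $A' \times A'$ precisely when $b^* \in N(a_1) \cap N(a_2)$, an event of probability $|N(a_1)\cap N(a_2)|/|B|$. This ties the quantity I must control, the common neighbourhoods, directly to the distribution of $A'$.

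First I would record the basic counts. Writing $d(b) = \#\{a \in A : (a,b)\in G\}$, I have $\sum_{b\in B} d(b) = |G|$, and the number of paths of length two through $B$ satisfies
$$\sum_{a_1,a_2 \in A} |N(a_1)\cap N(a_2)| = \sum_{b \in B} d(b)^2 \geq \frac{|G|^2}{|B|},$$
the inequality being Cauchy--Schwarz. Since $\mathbb{E}|A'|^2 = \sum_{a_1,a_2} \Pr[b^* \in N(a_1)\cap N(a_2)] = |B|^{-1}\sum_{a_1,a_2}|N(a_1)\cap N(a_2)|$, this yields the crucial lower bound $\mathbb{E}|A'|^2 \geq |G|^2/|B|^2$.

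Next I would control the bad pairs. Call $(a_1,a_2)$ \emph{bad} if $|N(a_1)\cap N(a_2)| < c\,|G|^2/(|A|^2|B|)$ for a small constant $c$ to be fixed, and let $Y$ count the bad pairs lying in $A'\times A'$. The same first-moment computation gives $\mathbb{E}Y = |B|^{-1}\sum_{\mathrm{bad}} |N(a_1)\cap N(a_2)| \leq |A|^2 \cdot c\,|G|^2/(|A|^2|B|^2) = c\,|G|^2/|B|^2$, since there are at most $|A|^2$ bad pairs and each contributes a term below the threshold. I would then examine the single random variable $|A'|^2 - CY$; its expectation is at least $(1-Cc)|G|^2/|B|^2$, so choosing $c$ and $C$ (depending on $\epsilon$) with $Cc \leq \tfrac12$ and $C \geq 1/\epsilon$ forces $\mathbb{E}[|A'|^2 - CY] \geq \tfrac12 |G|^2/|B|^2$. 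Some outcome $b^*$ therefore satisfies $|A'|^2 - CY \geq \tfrac12 |G|^2/|B|^2$, which simultaneously yields $|A'| \gg_\epsilon |G|/|B|$ and $Y \leq C^{-1}|A'|^2 \leq \epsilon|A'|^2$; the remaining $(1-\epsilon)|A'|^2$ pairs are good, as required.

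The step I expect to be the main obstacle is the bookkeeping of constants in the final combination. The method naturally splits the $\epsilon$-dependence between the lower bound on $|A'|$ and the common-neighbour threshold, and one must set the bad-pair threshold at scale $c \approx \epsilon$ rather than at an absolute constant, so that the expectation of $|A'|^2 - CY$ stays bounded below by a positive multiple of $|G|^2/|B|^2$; the allocation of constants can then be shifted to match the exact form of the statement. Everything else---the two first-moment computations and the Cauchy--Schwarz bound on paths of length two---is routine once the random neighbourhood $A'$ of the chosen vertex $b^*$ is taken as the right object.
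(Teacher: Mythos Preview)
Your argument is the standard dependent random choice proof and is correct; indeed the paper does not give its own proof of this lemma but simply cites Tao--Vu and Bourgain--Garaev, where precisely this argument appears. The only cosmetic point is that your proof places the $\epsilon$-dependence in the common-neighbourhood threshold (you take $c\approx\epsilon$) while the lower bound $|A'|\geq (1/\sqrt{2})\,|G|/|B|$ you obtain is actually absolute, whereas the paper's statement attaches the $\gg_\epsilon$ to $|A'|$; since $\epsilon$ is fixed once and for all in the downstream application this is immaterial, and you correctly flag it.
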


\begin{lemma}\label{theorem:pathsapp}
For sets $A,B$ and $G \subseteq A \times B$ there exists $A' \subseteq A$ and $H \subseteq A' \times A'$ with $|A'|\gg_{\epsilon} \frac{|G|}{|B|}$ and $|H|\geq (1-\epsilon) |A'|^2$ such that 

\begin{enumerate}
\item $|A'\overset{H}-A'|\ll_{\epsilon} \frac{|A\overset{G}- B|^2 |A|^2 |B|}{|G|^2}.$
\item $|A'\overset{H}/A'|\ll_{\epsilon} \frac{|A\overset{G}/ B|^2 |A|^2 |B|}{|G|^2}.$
\end{enumerate}
\end{lemma}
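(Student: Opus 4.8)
The plan is to prove the two assertions of Lemma~\ref{theorem:pathsapp} by first invoking Lemma~\ref{theorem:pathsoflengthtwo} to pass to a large subset $A' \subseteq A$ with $|A'| \gg_\epsilon |G|/|B|$ on which $(1-\epsilon)|A'|^2$ pairs $(a_1,a_2)$ enjoy the ``many common neighbours'' property $|N(a_1)\cap N(a_2)| \gg |G|^2/(|A|^2|B|)$. I take $H$ to be exactly this set of good pairs, so that $|H|\geq(1-\epsilon)|A'|^2$ is immediate. The content then lies in bounding $|A'\overset{H}-A'|$ and $|A'\overset{H}/A'|$.

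\vspace{0.5ex}

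For the subtraction bound, the idea is a popularity/counting argument. Fix a good pair $(a_1,a_2)\in H$ and any common neighbour $b \in N(a_1)\cap N(a_2)$. Since $(a_1,b),(a_2,b)\in G$, each such $b$ witnesses the difference $a_1-a_2$ as a difference of two elements of $A\overset{G}-B$: indeed $(a_1 - b)-(a_2-b) = a_1 - a_2$, with $a_1 - b, a_2 - b \in A\overset{G}-B$. The plan is to count incidences of the form $(a_1,a_2,b)$ with $(a_1,a_2)\in H$ and $b$ a common neighbour, in two ways. On one hand, each good pair contributes at least $\gg|G|^2/(|A|^2|B|)$ such triples, so the total is $\gg |H| \cdot |G|^2/(|A|^2|B|) \gg |A'|^2 |G|^2/(|A|^2|B|)$. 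On the other hand, each triple determines a representation of an element of $(A\overset{G}-B)-(A\overset{G}-B)$, and the number of such representations is controlled by the additive energy of $A\overset{G}-B$, which is at most $|A\overset{G}-B|^3$. Comparing a lower bound on the number of distinct differences $a_1 - a_2$ appearing (this is $|A'\overset{H}-A'|$) against the total count, via Cauchy--Schwarz, yields the claimed estimate $|A'\overset{H}-A'| \ll_\epsilon |A\overset{G}-B|^2 |A|^2 |B|/|G|^2$. The division bound follows by the identical argument with subtraction replaced by division throughout, using $(a_1/b)/(a_2/b) = a_1/a_2$ and the set $A\overset{G}/B$.

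\vspace{0.5ex}

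The step I expect to be the main obstacle is getting the exponents in the energy-counting argument to match the stated bound exactly, rather than the approach itself. The delicate point is that each element $d = a_1 - a_2 \in A'\overset{H}-A'$ may arise from many different pairs $(a_1,a_2)$ and many different common neighbours $b$, so one must carefully separate the count of \emph{distinct} differences from the count of \emph{weighted} representations. The clean way to handle this is to write the triple count as a sum over $d \in A'\overset{H}-A'$ of the number of representations, bound that sum above by the energy of $A\overset{G}-B$ (appealing to the fact that a common neighbour $b$ gives $a_1-b, a_2-b \in A\overset{G}-B$ with fixed difference $d$), and bound it below by $|A'|^2 |G|^2/(|A|^2|B|)$; then Cauchy--Schwarz against the number of distinct $d$ converts the energy bound into the desired cardinality bound. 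Keeping track of which quantity is being summed, and ensuring the energy bound $\ll |A\overset{G}-B|^3$ is applied with the correct normalisation, is where the arithmetic must be done with care.
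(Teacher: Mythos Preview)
Your setup is exactly that of the paper: apply Lemma~\ref{theorem:pathsoflengthtwo}, take $H$ to be the set of pairs with many common neighbours, and use a common neighbour $b$ to write $a_1-a_2=(a_1-b)-(a_2-b)$ with both terms in $A\overset{G}-B$. The difficulty you flag, and the remedy you propose, is where the proposal goes wrong.

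You count \emph{all} triples $(a_1,a_2,b)$ with $(a_1,a_2)\in H$ and $b\in N(a_1)\cap N(a_2)$, and then try to bound the number of distinct differences from above using energy and Cauchy--Schwarz. But Cauchy--Schwarz applied to $T=\sum_{d\in A'\overset{H}-A'} m(d)$ gives $T^2\le |A'\overset{H}-A'|\cdot \sum_d m(d)^2$, which is a \emph{lower} bound on $|A'\overset{H}-A'|$, not an upper bound. Likewise, mapping a triple to the pair $(a_1-b,a_2-b)\in (A\overset{G}-B)^2$ is not injective once $(a_1,a_2)$ is allowed to vary over all of $H$, so one cannot simply say $T\le |A\overset{G}-B|^2$, and bounding by the additive energy of $A\overset{G}-B$ does not produce the stated inequality either. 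As written, the argument does not close.

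The missing idea, which the paper uses, is to fix for each $x\in A'\overset{H}-A'$ a \emph{single} representative pair $(a_1(x),a_2(x))\in H$ with $a_1(x)-a_2(x)=x$. Then the set
\[
Y=\bigl\{(x,b): x\in A'\overset{H}-A',\ b\in N(a_1(x))\cap N(a_2(x))\bigr\}
\]
has $|Y|\gg |A'\overset{H}-A'|\cdot \dfrac{|G|^2}{|A|^2|B|}$, and now the map $(x,b)\mapsto (a_1(x)-b,\,a_2(x)-b)$ \emph{is} an injection into $(A\overset{G}-B)^2$: from $(s,t)$ one recovers $x=s-t$, hence $a_1(x)$, hence $b=a_1(x)-s$. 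This gives $|Y|\le |A\overset{G}-B|^2$ directly and the bound follows; no energy or Cauchy--Schwarz is needed. The ratio case is identical.
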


\begin{proof}
Apply Lemma \ref{theorem:pathsoflengthtwo}. Let $H$ be the set of $(a_1,a_2)\in A' \times A'$ for which $\left|N(a_1)\cap N(a_2)\right|$ so that $|H|\geq (1-\epsilon)|A'|^2$. We prove the result for $|A'\overset{H}-A'|$, leaving that for $|A'\overset{H}/A'|$ as an exercise. 

For each $x \in A' \overset{H}-A'$ fix $\left(a_1(x),a_2(x)\right) \in H$ such that $a_1(x)-a_2(x)=x$. Now let $Y=\left\{(x,b):x \in A' \overset{H}-A', b \in N(a_1(x))\cap N(a_2(x))\right\}.$ It is clear that $|Y|\gg |A' \overset{H}-A'| \frac{|G|^2}{|A|^2|B|}.$ On the other hand the injection 
\begin{align*}
f:&Y \to (A\overset{G}- B) \times (A\overset{G}- B)\\
f:&(x,b)\mapsto (a_1(x)-b,a_2(x)-b)
\end{align*}
shows that $|Y|\leq |A \overset{G}-B|^2$. Comparing the upper and lower bounds on $|Y|$ gives the result.
\end{proof}

The following result can also be found in \cite{TV}, where it is set as exercise 2.5.4. It is not clear that a proof can usually be found in the literature, and so we give one here.

\begin{lemma}\label{theorem:BKTexercise1}
Let $0< \epsilon < 1/4$ and let $G \subseteq A \times B$ and $H \subseteq B \times C$, such that $|G|\geq (1-\epsilon)|A||B|$ and $|H|\geq (1-\epsilon)|B||C|$. Then there exist $A' \subseteq A$ and $C' \subseteq C$ with $|A'|\geq (1-\sqrt{\epsilon})|A|$ and $|C'|\geq (1-\sqrt{\epsilon})|C|$ such that 

$$|A'-C'|\ll_{\epsilon}\frac{|A\overset{G}-B||B\overset{H}-C|}{|B|}.$$
\end{lemma}

\begin{proof}
Let $A'$ be the set of $a \in A$ with a $G$-degree of at least $(1-\sqrt{\epsilon})|B|$, and $C'$ be the set of $c \in C$ with an $H$-degree of at least $(1-\sqrt{\epsilon})|B|$. Since $|G|\geq (1-\epsilon)|A||B|$ and $|H|\geq (1-\epsilon)|B||C|$ we know that $|A'| \geq (1-\sqrt{\epsilon}) |A|$ and $|C'|\geq (1-\sqrt{\epsilon})|C|$. Now if $a \in A'$ and $c \in C'$ then there are at least $(1-2\sqrt{\epsilon})|B|$ elements $b \in B$ such that $(a,b)\in G$ and $(b,c)\in H$. Denote the set of such elements by $B_{ac}$

For each $x \in A'-C'$ fix a single $a(x) \in A', c(x) \in C'$ such that $a(x)-c(x)=x$. Let $Y=\left\{(x,b):x \in A'-C', b \in B_{a(x)c(x)} \right\}.$ It is clear that $|Y|\gg_{\epsilon} |A'-C'||B|$ On the other hand the injection 
\begin{align*}
f:&Y \to (A\overset{G}- B) \times (B\overset{H}- C)\\
f:&(x,b)\mapsto (a(x)-b,b-c(x))
\end{align*}
shows that $|Y|\leq |A \overset{G}-B||B \overset{H}-C|$. Comparing the upper and lower bounds on $|Y|$ gives the result.
\end{proof}

We record a particular consequence of Lemma \ref{theorem:BKTexercise1}, in the case where $A=B=C$ and $G=H$:

\begin{corollary}\label{theorem:BKTexercise2}
Let $0<\epsilon<1/4$ and let $G \subseteq A \times A$ with $|G| \geq (1-\epsilon)|A|^2$. Then there exists $A' \subseteq A$ with $|A'|\geq(1-2\sqrt{\epsilon})|A|$ such that 

$$|A'-A'|\ll_{\epsilon}\frac{|A \overset{G}-A|^2}{|A|}.$$
\end{corollary}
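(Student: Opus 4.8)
The plan is to read off the corollary from Lemma \ref{theorem:BKTexercise1} by specializing to the diagonal and then collapsing the two output sets into one. First I would apply Lemma \ref{theorem:BKTexercise1} with the choices $B = C = A$ and $H = G$. The single hypothesis $|G| \geq (1-\epsilon)|A|^2$ is exactly the pair of hypotheses the lemma asks for, so it delivers subsets $A_1, A_2 \subseteq A$, each of size at least $(1-\sqrt{\epsilon})|A|$, satisfying
$$|A_1 - A_2| \ll_{\epsilon} \frac{|A \overset{G}- A|^2}{|A|},$$
since under this specialization both partial difference sets $|A \overset{G}- B|$ and $|B \overset{H}- C|$ equal $|A \overset{G}- A|$, while $|B| = |A|$.

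Next I would set $A' = A_1 \cap A_2$ and verify its size by the inclusion-exclusion bound. Because $A_1, A_2 \subseteq A$, their union is contained in $A$, so $|A_1 \cap A_2| \geq |A_1| + |A_2| - |A| \geq 2(1 - \sqrt{\epsilon})|A| - |A| = (1 - 2\sqrt{\epsilon})|A|$, which is precisely the claimed lower bound on $|A'|$.

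Finally, I would control $|A' - A'|$ by observing that the intersection inherits membership in each factor: if $x, y \in A' = A_1 \cap A_2$ then in particular $x \in A_1$ and $y \in A_2$, whence $x - y \in A_1 - A_2$. Thus $A' - A' \subseteq A_1 - A_2$, and so $|A' - A'| \leq |A_1 - A_2| \ll_{\epsilon} |A \overset{G}- A|^2 / |A|$, as required. There is no genuine obstacle in this deduction; the only point worth flagging is that Lemma \ref{theorem:BKTexercise1} is inherently asymmetric and produces two \emph{a priori} distinct refinements $A_1$ and $A_2$, so one must intersect them to obtain a single set, and the resulting loss of $\sqrt{\epsilon}$ in each factor is exactly what produces the $1 - 2\sqrt{\epsilon}$ density in the statement.
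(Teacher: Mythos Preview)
Your proof is correct and follows exactly the paper's approach: apply Lemma~\ref{theorem:BKTexercise1} with $B=C=A$ and $H=G$ to obtain $A_1,A_2$, then take $A'=A_1\cap A_2$. You have simply made explicit the inclusion--exclusion bound on $|A_1\cap A_2|$ and the containment $A'-A'\subseteq A_1-A_2$, which the paper leaves to the reader.
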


\begin{proof}
By Lemma \ref{theorem:BKTexercise1} there exist $A_1,A_2 \subseteq A$ with $|A_1|,|A_2|\geq (1-\sqrt{\epsilon})|A|$ such that $|A_1-A_2|\ll_{\epsilon}\frac{|A \overset{G}-A|^2}{|A|}.$ We then let $A= A_1 \cap A_2$.
\end{proof}

We now prove Lemma \ref{theorem:halfbsg}. Let $\epsilon>0$ be sufficiently small. By Lemma \ref{theorem:pathsapp} there exists there exists $A' \subseteq A$ and $H \subseteq A' \times A'$ with $|A'|\gg_{\epsilon} \frac{|G|}{|B|}$ and $|H|\geq (1-\epsilon) |A'|^2$ such that $|A'\overset{H}-A'|\ll_{\epsilon} \frac{|A\overset{G}- B|^2 |A|^2 |B|}{|G|^2}$ and $|A'\overset{H}/A'|\ll_{\epsilon} \frac{|A\overset{G}/ B|^2 |A|^2 |B|}{|G|^2}.$ Apply Corollary \ref{theorem:BKTexercise2} to obtain $A'' \subseteq A'$ with $|A''|\geq(1-\epsilon)|A'|$ such that
 
$$|A''-A''|\ll \frac{|A'\overset{H}-A'|^2}{|A'|}\ll \frac{|A\overset{G}-B|^4|A|^4|B|^3}{|G|^5}.$$

Now let $H' = H \cap (A'' \times A'')$. Since both $H$ and $A'' \times A''$ are of cardinality at least $(1-\epsilon)|A'|^2$ we have $|H'|\gg |A'|^2$. So by Cauchy-Schwarz we have

$$E_{\times}(A'')\geq \frac{|H'|^2}{|A''\overset{H'}/A''|} \gg \frac{|A'|^4}{|A'\overset{H}/A'|} \gg \frac{|G|^6}{|B|^5|A|^2|A\overset{G}-B|^2}$$ which completes the proof. \qed

\section{Proving Theorems \ref{theorem:incidences} and \ref{theorem:beck2}}\label{section:proofs}

\subsection{Proof of Theorem \ref{theorem:incidences}}

Suppose that $I(P,L)\gtrsim N^{3/2-\epsilon}$. We shall show that $\epsilon \geq 1/662$. By Lemma \ref{theorem:pointrefine2} we may assume that every line in $L$ is incident to at most $O\left(N^{1/2+\epsilon}\right)$ points in $P$. By a dyadic pigeonholing we may find a subset $P_1 \subseteq P$ and an integer $K$ with 

\begin{equation}\label{eq:incidences7}
|P_1|K \gtrsim N^{3/2- \epsilon}
\end{equation}

such that every point in $P$ is incident to $\Theta(K)$ lines in $L$. Note moreover that since $|P_1|\leq N$ we have 

\begin{equation}\label{eq:incidences8}
K \gtrsim N^{1/2- \epsilon}
\end{equation}

Applying Proposition \ref{theorem:refine1} and then \ref{theorem:refine2} we know that at least one of the following is true:

\begin{enumerate}
\item $|P_1|K^2 \ll |L|.$
\item $|P_1|K^3 \ll |L|^2$
\item $|P_1|K^5 \ll |L|^3.$ 
\item $K^5 \ll |L|^2.$
\item There exists $R \subseteq P_1$ with $|R|\approx \frac{|P_1|K^8}{|L|^4}$ and points $p_1,p_2,p_3,p_4$ such that $(R,p_i)$ is $O(K)$-good for each $i$. Moreover, the points $p_2,p_3,p_4$ are colinear along a line $l$ in $L$ that is not incident to $p_1$.  
\end{enumerate}

We quickly dispense with the first four cases. If $|P_1|K^2 \ll |L| $ then applying (\ref{eq:incidences7}), (\ref{eq:incidences8}) and the fact that $|L|\leq N$ we get $\epsilon \geq 1/2-o(1)$. By the same arguments, the other three cases yield $\epsilon \geq 1/6-o(1)$, $\epsilon \geq 1/10-o(1)$ and  $\epsilon \geq 1/10-o(1)$ respectively.

We are left with the fifth case. Since the line $l$ that is incident to $p_2,p_3,p_4$ is an element of $L$, we know that it is incident to at most $O\left(N^{1/2+\epsilon}\right)$ points in $R$. We may assume that $\left|R \setminus \left\{l\right\}\right|\approx |R|$ or we are already done.

Apply Proposition \ref{theorem:reduction} to $R \setminus \left\{l\right\}$ to obtain $A, B \subseteq \mathbb{F}_p$ with $|A|,|B| \ll K$ and $G \subseteq A \times B$ with $|G|\gg \frac{|P_1|K^8}{|L|^4}$ such that $|A \overset{G}-B|,|A \overset{G}/B|\ll K$.

By Proposition \ref{theorem:partialsumprod} we have $|G|^{55} \ll K^{109}$ and so $|P_1|^{55}K^{331} \ll |L|^{220}.$ Applying (\ref{eq:incidences7}) and (\ref{eq:incidences8}) as before we get $\epsilon \geq 1/662 - o(1)$.
\qed

\subsection{Proof of Theorem \ref{theorem:beck2}}

For $l \in L(P)$, write $\mu(l)$ for the number of points in $P$ incident to $l$. It is clear that

$$|P|^2 \approx \sum_{l \in L(P)}\mu(l)^2.$$

By a dyadic pigeonholing there exists $L_1 \subseteq L(P)$ and an integer $k$ such that $\mu(l)\approx k$ for all $l \in L_1$ and 

\begin{equation}
\label{eq:incidences9}
|L_1|k^2 \approx |P|^2.
\end{equation}

We have $I(P,L_1)\approx |L_1|k$. So there exists $P_1 \subseteq P$ and an integer $K$ such that every point in $P_1$ is incident to $\Theta(K)$ lines in $L_1$ and $|P_1|K \gtrsim |L_1|k$. We therefore have 

\begin{equation}
\label{eq:incidences10}
K \gtrsim \frac{|L_1|k}{|P_1|}.
\end{equation}

Applying Proposition \ref{theorem:refine1} and then Proposition \ref{theorem:refine2} to $P_1, L_1$ and $K$ we know that at least one of the following is true

\begin{enumerate}
\item $|P_1|K^2 \ll |L_1|.$
\item $|P_1|K^3 \ll |L_1|^2$
\item $|P_1|K^5 \ll |L_1|^3.$ 
\item $K^5 \ll |L_1|^2.$
\item There exists $R \subseteq P_1$ with $|R|\approx \frac{|P_1|K^8}{|L_1|^4}$ and points $p_1,p_2,p_3,p_4$ such that $(R,p_i)$ is $O(K)$-good for each $i$. Moreover, the points $p_2,p_3,p_4$ are colinear along a line $l\in L_1$ that is not incident to $p_1$.  
\end{enumerate}

In the first case, we apply (\ref{eq:incidences10}) to obtain $|P_1|\lesssim 1$. The second, third and fourth cases yield respectively $k \lesssim 1$, $k \lesssim 1$ and $k \gtrsim |P|$. If $k \lesssim 1$ then we have $|L(P)|\gtrsim |P|^2$, and if $k \gtrsim |P|$ then there are $\widetilde{\Omega}(|P|)$ colinear points in $P$. 

So we are left with the fifth case. Since $l \in L_1$ it is incident to at most $O(k)$ points in $P$, and so we may assume $\left|R \setminus \left\{l\right\}\right|\approx |R|$ or we are already done. Apply Proposition \ref{theorem:reduction} to obtain $A, B \subseteq \mathbb{F}_p$ with $|A|,|B| \ll K$ and $G \subseteq A \times B$ with $|G|\gg \frac{|P_1|K^8}{|L_1|^4} $ such that $|A \overset{G}-B|,|A \overset{G}/B|\ll K$.

By Proposition \ref{theorem:partialsumprod} we have $|G|^{55} \ll K^{109}$ and so $|P_1|^{55} K^{331} \ll |L_1|^{220}$. By (\ref{eq:incidences10}) this gives $|L_1|^{111}k^{331}\lesssim |P_1|^{276}$. By (\ref{eq:incidences9}) and the fact that $|P_1|\leq |P|$ we then get $k^{109}\lesssim |P|^{54}$ and so by (\ref{eq:incidences9}) again $|P|^{110} \lesssim |L_1|^{109}$. We conclude therefore that $|L(P)|\geq |L_1|\gtrsim |P|^\frac{110}{109}$.
\qed

\bibliographystyle{plain}
\bibliography{Incidencesbibliography}

\end{document}